\begin{document}

\title{Comparison for upper tail probabilities of random series}
\author{Fuchang Gao\thanks{fuchang@uidaho.edu, University of Idaho, Moscow, Idaho, USA}
      \and
Zhenxia Liu\thanks{liu3777@vandals.uidaho.edu, University of Idaho, Moscow, Idaho, USA}
      \and
Xiangfeng Yang\thanks{xyang2@tulane.edu, Universidade de Lisboa, Lisboa, Portugal}}

\date{\today}% or replace '\today' by a specified date such as 'January 9, 2012'

\maketitle

\begin{abstract}
Let $\{\xi_n\}$ be a sequence of independent and identically distributed random variables. In this paper we study the comparison for two upper tail probabilities $\mathbb{P}\left\{\sum_{n=1}^{\infty}a_n|\xi_n|^p\geq r\right\}$ and $\mathbb{P}\left\{\sum_{n=1}^{\infty}b_n|\xi_n|^p\geq r\right\}$ as $r\rightarrow\infty$ with two different real series $\{a_n\}$ and $\{b_n\}.$ The first result is for Gaussian random variables $\{\xi_n\},$ and in this case these two probabilities are equivalent after suitable scaling. The second result is for more general random variables, thus a weaker form of equivalence (namely, logarithmic level) is proved.
\end{abstract}

%The following allows us to write aliases for some commands.
\renewcommand{\theequation}{\thesection.\arabic{equation}}
\newtheorem{theorem}{Theorem}[section]
\newtheorem{thm}{Theorem}[section]
\newtheorem{cor}[thm]{Corollary}
\newtheorem{lemma}[thm]{Lemma}
\newtheorem{prop}[thm]{Proposition}
\theoremstyle{definition}
\newtheorem{defn}[thm]{Definition}
\theoremstyle{remark}
\newtheorem{remark}[thm]{Remark}
\numberwithin{equation}{section}
\newcommand{\dx}{\Delta x}
\newcommand{\dt}{\Delta t}

%how to define Keywords and MSC
\def\keywords{\vspace{.5em}\hspace{-2em}
{\textit{Keywords and phrases}:\,\relax%
}}
\def\endkeywords{\par}

\def\MSC{\vspace{.0em}\hspace{-2em}
{\textit{AMS 2010 subject classifications}:\,\relax%
}}
\def\endMSC{\par}

\keywords{tail probability, random series, small deviation}

\MSC{60F10; 60G50}

\section{Introduction}
Let $\{\xi_n\}$ be a sequence of independent and identically distributed (i.i.d.) random variables, and $\{a_n\}$ be a sequence of positive real numbers. We consider the random series $\sum_{n=1}^{\infty}a_n\xi_n$. Such random series are basic objects in time series analysis and in regression models (see \cite{Davis-Resnick}), and there have been a lot of research. For example, \cite{Gluskin-Kwapien} and \cite{Latala} studied tail probabilities and moment estimates of the random series when $\{\xi_n\}$ have logarithmically concave tails. Of special interest are the series of positive random variables, or the series of the form $\sum_{n=1}^{\infty}a_n|\xi_n|^p$. Indeed, by Karhunen-Lo\'{e}ve expansion, the $L_2$ norm of a centered continuous Gaussian process $X(t), t\in[0,1],$ can be represented as $\|X\|_{L_2}=\sum_{n=1}^{\infty}\lambda_nZ_n^2$ where $\lambda_n$ are the eigenvalues of the associated covariance
operator, and $Z_n$ are i.i.d. standard Gaussian random variables. It is also known (see \cite{Lifshits-1994}) that the series $\sum_{n=1}^{\infty}a_n|Z_n|^p$ coincides with some bounded  Gaussian process $\{Y_t,t\in \mathrm{T}\}$, where $\mathrm{T}$ is a suitable parameter set: $\sum_{n=1}^{\infty}a_n|Z_n|^p=\sup_{\mathrm{T}}Y_t.$

In this paper, we study the the limiting behavior of the upper tail probability of the series
\begin{align}\label{eq:introduction}
\mathbb{P}\left\{\sum_{n=1}^{\infty}a_n|\xi_n|^p\geq r\right\}\qquad\text{ as }r\rightarrow\infty.
\end{align}
This probability is also called large deviation probability (see \cite{Arcones}). As remarked in \cite{Gao-Li}, for Gaussian process $\|X\|_{L_2}=\sum_{n=1}^{\infty}\lambda_nZ_n^2,$ the eigenvalues $\lambda_n$ are rarely found exactly. Often, one only knows the asymptotic approximation. Thus, a natural question is to study the relation between
the upper tail probability of the original random series and the one with approximated eigenvalues. Also, it is much easier to analyze the rate function in the large deviation theory when $\{a_n\}$ are explicitly given instead of asymptotic approximation.

Throughout this paper, the following notations will be used. The $l^q$ norm of a real sequence $a=\{a_n\}$ is denoted by $||a||_q=\left(\sum_{n=1}^{\infty} a_n^q\right)^{1/q}.$ In particular, the $l^{\infty}$ norm should be understood as $||a||_{\infty}=\max|a_n|.$

We focus on the following two types of comparisons. The first is at the exact level
\begin{align}\label{comparison-exact}
\frac{\mathbb{P}\left\{\sum_{n=1}^{\infty}a_n|\xi_n|\geq r\|a\|_2\beta+|\alpha|\sum_{n=1}^{\infty} a_n\right\}}{\mathbb{P}\left\{\sum_{n=1}^{\infty}b_n|\xi_n|\geq r\|b\|_2\beta+|\alpha|\sum_{n=1}^{\infty} b_n\right\}}\sim1\qquad\text{ as }r\rightarrow\infty
\end{align}
where $\{\xi_n\}$ are i.i.d. Gaussian random variables $N(\alpha,\beta^2);$ see Theorem \ref{exact-standard-normal} and Theorem \ref{exact-general-normal}. This is motivated by \cite{Gao-Hannig-Torcaso} in which the following exact level comparison theorems for small deviations were obtained: as $r\rightarrow0,$
$\mathbb{P}\left\{\sum_{n=1}^{\infty}a_n|\xi_n|\leq r\right\}\sim c\mathbb{P}\left\{\sum_{n=1}^{\infty}b_n|\xi_n|\leq r\right\}$ for i.i.d. random variables $\{\xi_n\}$ whose common distribution satisfies several weak assumptions in the vicinity of zero. The proof of the small deviation comparison is based on the equivalence form of $\mathbb{P}\left\{\sum_{n=1}^{\infty}a_n|\xi_n|\leq r\right\}$ introduced in \cite{Lifshits-1997}. Our proof of upper tail probability comparison (\ref{comparison-exact}) is also based on an equivalent form of $\mathbb{P}\left\{\sum_{n=1}^{\infty}a_n|\xi_n|\geq r\right\}$ in \cite{Lifshits-1994} for Gaussian random variables. The main difficulty is to come up with suitable inequalities which can be used for a specified function $\widehat{\varepsilon}(x,y)$ in Lemma \ref{Lifshits-Theorem2}, and such inequalities are obtained in Lemma \ref{core-lemma-1} and Lemma \ref{core-lemma-2}.

For more general random variables, difficulties arise due to the lack of known equivalent form of $\mathbb{P}\left\{\sum_{n=1}^{\infty}a_n|\xi_n|\geq r\right\}.$ Thus, instead of exact comparison, we consider logarithmic level comparison for upper tail probabilities
\begin{align}\label{comparison-log}
\frac{\log\mathbb{P}\left\{\sum_{n=1}^{\infty}a_n|\xi_n|\geq r\|a\|_q\right\}}{\log\mathbb{P}\left\{\sum_{n=1}^{\infty}b_n|\xi_n|\geq r\|b\|_q\right\}}\sim1\qquad\text{ as }r\rightarrow\infty.
\end{align}
It turns out that under suitable conditions on the sequences $\{a_n\}$ and $\{b_n\}$ the comparison (\ref{comparison-log}) holds true for i.i.d. random variables $\{\xi_n\}$ satisfying $$\lim_{u\rightarrow\infty}u^{-p}\log\mathbb{P}\left\{|\xi_1|\geq u\right\}=-c$$ for some finite constants $p\geq1$ and $c>0;$ see Theorem \ref{rough-comparison}. Here we note that logarithmic level comparisons for small deviation probabilities can be found in \cite{Gao-Li}.

From comparisons (\ref{comparison-exact}) and (\ref{comparison-log}), we see that two upper tail probabilities are equivalent as long as suitable scaling is made. We believe that this holds true for more general random variables; see the conjecture at the end of Section \ref{section-exact-comparison} for details.

\section{Exact comparisons for Gaussian random series}\label{section-exact-comparison}

\subsection{The main results}\label{exact-for-Gaussian}
The following two theorems are the main results in this section. The first one is on standard Gaussian random variables.
\begin{theorem}\label{exact-standard-normal}
Let $\{Z_n\}$ be a sequence of i.i.d. standard Gaussian random variables $N(0,1),$ and $\{a_n\},\{b_n\}$ be two non-increasing sequences of positive real numbers such that $\sum_{n=1}^{\infty}a_n<\infty,\sum_{n=1}^{\infty}b_n<\infty,$
\begin{equation}\label{a-n-b-n-converge}
\begin{aligned}
\prod_{n=1}^{\infty}\left(2-\frac{a_n/\|a\|_2}{b_n/\|b\|_2}\right)\text{ and }\prod_{n=1}^{\infty}\left(2-\frac{b_n/\|b\|_2}{a_n/\|a\|_2}\right)\text{ converge}.
\end{aligned}
\end{equation}
Then as $r\rightarrow\infty$
\begin{align*}
\mathbb{P}\left\{\sum_{n=1}^{\infty}a_n|Z_n|\geq r\|a\|_2\right\}\sim\mathbb{P}\left\{\sum_{n=1}^{\infty}b_n|Z_n|\geq r\|b\|_2\right\}.
\end{align*}
\end{theorem}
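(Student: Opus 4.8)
The plan is to exploit the homogeneity of the assertion to reduce to unit $\ell^2$ norm, and then to push Lifshits' equivalent form of the upper tail through a coordinatewise comparison. Both the hypothesis (\ref{a-n-b-n-converge}) and the conclusion depend only on the normalized sequences $\widetilde a_n:=a_n/\|a\|_2$ and $\widetilde b_n:=b_n/\|b\|_2$ --- indeed $\mathbb P\{\sum_n a_n|Z_n|\ge r\|a\|_2\}=\mathbb P\{\sum_n\widetilde a_n|Z_n|\ge r\}$, and likewise for $b$ --- so I would assume from the start that $\|a\|_2=\|b\|_2=1$. In that case $a,b$ are positive, non-increasing and summable with unit $\ell^2$ norm, the products $\prod_n(2-a_n/b_n)$ and $\prod_n(2-b_n/a_n)$ both converge (in particular $a_n/b_n\to1$), and the goal reduces to $\mathbb P\{\sum_n a_n|Z_n|\ge r\}\sim\mathbb P\{\sum_n b_n|Z_n|\ge r\}$ as $r\to\infty$.

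Next I would apply Lemma~\ref{Lifshits-Theorem2}. Realizing $\sum_n a_n|Z_n|=\sup_{t\in\mathrm T}Y_t$ as the supremum of a bounded Gaussian process (as recalled in the introduction), whose one-dimensional marginals have variance $\|a\|_2^2=1$, Lifshits' theorem expresses the upper tail, up to a factor $1+o(1)$, through the auxiliary function $\widehat\varepsilon(x,y)$ attached to the sequence $\{a_n\}$; I expect the outcome to take the shape
\[
\mathbb P\Bigl\{\sum_{n=1}^\infty a_n|Z_n|\ge r\Bigr\}=\bigl(1+o(1)\bigr)\,\Phi_0(r)\prod_{n=1}^\infty\widehat\varepsilon(r,a_n),
\]
where $\Phi_0$ is a universal prefactor (the same for $a$ and $b$, since both are normalized, and carrying the $e^{-r^2/2}$ decay), where $\widehat\varepsilon(r,t)\to1$ as $t\to0$ fast enough --- this is where $\sum_n a_n<\infty$ is used --- that the product converges for each fixed $r$, and where, for each fixed $t>0$, $\widehat\varepsilon(r,t)\to2$ as $r\to\infty$, the limiting value $2$ reflecting the two-sidedness of $|Z_n|$. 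Writing the same identity for $b$, dividing, and cancelling $\Phi_0(r)$ leaves
\[
\frac{\mathbb P\{\sum_n a_n|Z_n|\ge r\}}{\mathbb P\{\sum_n b_n|Z_n|\ge r\}}=\bigl(1+o(1)\bigr)\prod_{n=1}^\infty\frac{\widehat\varepsilon(r,a_n)}{\widehat\varepsilon(r,b_n)},
\]
so the task becomes to show that this infinite product of ratios tends to $1$ as $r\to\infty$.

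This is where Lemma~\ref{core-lemma-1} and Lemma~\ref{core-lemma-2} enter: they are tailored to supply, uniformly in all sufficiently large $r$ and all $n$, the sharp two-sided bound
\[
\Bigl(2-\frac{b_n}{a_n}\Bigr)^{-1}\le\frac{\widehat\varepsilon(r,a_n)}{\widehat\varepsilon(r,b_n)}\le 2-\frac{a_n}{b_n}.
\]
Multiplying these over $n>N$, the product of the tail ratios is trapped, uniformly in $r$, between $\bigl[\prod_{n>N}(2-b_n/a_n)\bigr]^{-1}$ and $\prod_{n>N}(2-a_n/b_n)$, and by (\ref{a-n-b-n-converge}) both of these tend to $1$ as $N\to\infty$. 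Given $\delta>0$ I would therefore choose $N$ so that both tail products lie in $(1-\delta,1+\delta)$, and then handle the first $N$ factors directly: for each fixed $n\le N$ the numbers $a_n,b_n$ are fixed and positive, so $\widehat\varepsilon(r,a_n)\to2$ and $\widehat\varepsilon(r,b_n)\to2$, whence $\prod_{n\le N}\widehat\varepsilon(r,a_n)/\widehat\varepsilon(r,b_n)\to1$ as $r\to\infty$. Combining the two ranges and letting $\delta\downarrow0$ gives $\prod_{n\ge1}\widehat\varepsilon(r,a_n)/\widehat\varepsilon(r,b_n)\to1$, which together with the previous display proves the theorem.

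The main obstacle is exactly the input to the last step, namely Lemma~\ref{core-lemma-1} and Lemma~\ref{core-lemma-2}. Proving them requires an explicit enough description of the function $\widehat\varepsilon(x,y)$ coming out of Lifshits' representation, followed by a genuine analytic estimate showing that $\widehat\varepsilon$ is ``almost multiplicatively affine'' in its second argument in precisely the quantitative form matching (\ref{a-n-b-n-converge}) --- that is, with comparison constants exactly $2-a_n/b_n$ and $2-b_n/a_n$, not some cruder $1+C|a_n/b_n-1|$; it is this sharpness that makes convergence of the two products the exactly matching hypothesis, and obtaining it is the delicate part. A secondary point, disposed of by the truncation above, is the legitimacy of exchanging the limit $r\to\infty$ with the infinite product, which is valid precisely because the bounds from the core lemmas are uniform in $r$ while the two products in (\ref{a-n-b-n-converge}) converge.
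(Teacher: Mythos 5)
Your overall architecture matches the paper's: normalize to $\|a\|_2=\|b\|_2=1$, invoke Lemma~\ref{Lifshits-Theorem2} (which for $N(0,1)$ gives $\widehat\varepsilon(ra_n,0)=2\Phi(ra_n)$ and a common prefactor $1-\Phi(r)$), reduce to showing $\prod_n\Phi(ra_n)/\Phi(rb_n)\to1$, and handle this by a head/tail split with uniform-in-$r$ control of the tail. But the one step carrying all the content is wrong as stated. You claim the two-sided bound $\bigl(2-b_n/a_n\bigr)^{-1}\le\widehat\varepsilon(r,a_n)/\widehat\varepsilon(r,b_n)\le 2-a_n/b_n$. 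This is false: whenever $a_n>b_n$ the ratio $\Phi(ra_n)/\Phi(rb_n)$ exceeds $1$ while $2-a_n/b_n<1$, and whenever $a_n<b_n$ the ratio is below $1$ while $(2-b_n/a_n)^{-1}>1$; so for every $n$ with $a_n\ne b_n$ one of your two inequalities fails. The paper's actual bound is $\Phi(ra_n)/\Phi(rb_n)\le\bigl(2-a_n/b_n\bigr)^{g(rb_n)}$ with $g(x)=-x\Phi'(x)/\Phi(x)\le0$: the \emph{negative, $r$- and $n$-dependent exponent} is what makes the inequality point the right way. It is obtained by a second-order Taylor expansion of $\Phi$ using $\Phi''\le0$ on $[0,\infty)$, followed by the elementary Lemma~\ref{core-lemma-1} ($1+a\delta\le(1+\delta)^a$ for $a\le0$) with $\delta=1-a_n/b_n$; Lemma~\ref{core-lemma-2} is not about $\widehat\varepsilon$ at all and is only needed for Theorem~\ref{exact-general-normal}.

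There is a second, related gap. Even with the corrected bound, your plan to ``multiply over $n>N$'' and trap the tail between $\bigl[\prod_{n>N}(2-b_n/a_n)\bigr]^{-1}$ and $\prod_{n>N}(2-a_n/b_n)$ cannot work, because condition (\ref{a-n-b-n-converge}) asserts only \emph{convergence} of these products, not absolute convergence: the factors $2-a_n/b_n$ may oscillate around $1$, so termwise bounds with varying exponents $g(rb_n)$ do not multiply up to the tail products. The paper resolves this with Lemma~\ref{lemma5Gao} (an Abel-summation estimate), applied to $\sum_{n\ge N}g(rb_n)\log(2-a_n/b_n)$, using that $\{rb_n\}$ is monotone (this is where the hypothesis that $\{a_n\},\{b_n\}$ are non-increasing enters) and that $g$ has bounded variation; this yields a bound uniform in $r$ that tends to $0$ as $N\to\infty$. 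Your proposal never uses the monotonicity hypothesis and has no mechanism for this uniformity under merely conditional convergence, so the proof is incomplete precisely at its essential point, which you yourself flag as ``the delicate part.''
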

For general Gaussian random variables $Z_n,$ it turns out that the condition (\ref{a-n-b-n-converge}) is not convenient to derive the comparison because some more complicated terms appear in the proof. Therefore, an equivalent condition in another form is formulated which forms the following comparison.
\begin{theorem}\label{exact-general-normal}
Let $\{Z_n\}$ be a sequence of i.i.d. Gaussian random variables $N(\alpha,\beta^2),$ and $\{a_n\},\{b_n\}$ be two non-increasing sequences of positive real numbers such that $\sum_{n=1}^{\infty}a_n<\infty,\sum_{n=1}^{\infty}b_n<\infty,$
\begin{equation}\label{a-n-b-n-converge-regular-normal}
\begin{aligned}
\sum_{n=1}^{\infty}\left(1-\frac{a_n/\|a\|_2}{b_n/\|b\|_2}\right)\text{ converges, and }\sum_{n=1}^{\infty}\left(1-\frac{a_n/\|a\|_2}{b_n/\|b\|_2}\right)^2<\infty.
\end{aligned}
\end{equation}
Then as $r\rightarrow\infty$
\begin{align*}
\mathbb{P}&\left\{\sum_{n=1}^{\infty}a_n|Z_n|\geq r\|a\|_2\beta+|\alpha|\sum_{n=1}^{\infty} a_n\right\}\\
&\qquad\qquad\sim\mathbb{P}\left\{\sum_{n=1}^{\infty}b_n|Z_n|\geq r\|b\|_2\beta+|\alpha|\sum_{n=1}^{\infty} b_n\right\}.
\end{align*}
\end{theorem}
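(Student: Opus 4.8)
The plan is to run the scheme already used for Theorem~\ref{exact-standard-normal}, now carrying along the parameters $\alpha$ and $\beta$. Two harmless reductions come first. Since $|Z_n|$ has the same law whether $Z_n\sim N(\alpha,\beta^2)$ or $Z_n\sim N(-\alpha,\beta^2)$, we may assume $\alpha\ge 0$; and replacing $Z_n$ by $Z_n/\beta\sim N(\alpha/\beta,1)$ and dividing the inequality inside each probability by $\beta$ reduces matters to $\beta=1$. So it suffices to prove, for i.i.d.\ $Z_n\sim N(\alpha,1)$ with $\alpha\ge 0$ (and noting that $\{a_n\}$ non-increasing and summable forces $\|a\|_2<\infty$),
\[
\mathbb{P}\Bigl\{\sum_{n=1}^{\infty}a_n|Z_n|\ge r\|a\|_2+\alpha\sum_{n=1}^{\infty}a_n\Bigr\}\ \sim\ \mathbb{P}\Bigl\{\sum_{n=1}^{\infty}b_n|Z_n|\ge r\|b\|_2+\alpha\sum_{n=1}^{\infty}b_n\Bigr\},\qquad r\to\infty .
\]
The shape of the thresholds is forced by the ``principal direction'' heuristic: the exceedance is driven by the linear functional $\sum_n a_nZ_n\sim N\bigl(\alpha\sum_n a_n,\ \|a\|_2^2\bigr)$, and $r\|a\|_2+\alpha\sum_n a_n$ is exactly its mean plus $r$ standard deviations, so that this functional crosses the threshold with probability $\mathbb{P}\{N(0,1)\ge r\}$, independent of the sequence; that is what makes an exact (rather than up-to-constant) equivalence possible. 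One should also note that a reduction to the standard-normal Theorem~\ref{exact-standard-normal} is not available: already a single term gives $\mathbb{P}\{a_1|Z_1|\ge ra_1+\alpha a_1\}=\mathbb{P}\{|Z_1|\ge r+\alpha\}\sim\mathbb{P}\{N(0,1)\ge r\}$ with $Z_1\sim N(\alpha,1)$, while the $N(0,1)$ analogue $\mathbb{P}\{|Z_1|\ge r\}\sim 2\,\mathbb{P}\{N(0,1)\ge r\}$; the spurious factor cancels between the two sides of the equivalence, so the law $N(\alpha,1)$ must be treated directly.

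The main tool is Lemma~\ref{Lifshits-Theorem2}, the Lifshits equivalent form, applied to the i.i.d.\ nonnegative variables $|Z_n|$, whose common density is $p(x)=\tfrac1{\sqrt{2\pi}}\bigl(e^{-(x-\alpha)^2/2}+e^{-(x+\alpha)^2/2}\bigr)$ on $[0,\infty)$; since $-\log\mathbb{P}\{|Z_1|\ge u\}\sim u^2/2$ the relevant exponent data are $p=2$, $c=1/2$. Feeding $p$ into Lemma~\ref{Lifshits-Theorem2} (that is, computing the function $\widehat\varepsilon(x,y)$ attached to $p$) yields, as $\rho\to\infty$, an asymptotic representation of $\mathbb{P}\{\sum_n a_n|Z_n|\ge\rho\}$ as an explicit functional of $\rho$ and of the $l^2$-normalized sequence $\{a_n/\|a\|_2\}$. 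I would evaluate this at $\rho_a:=r\|a\|_2+\alpha\sum_n a_n$ and at $\rho_b:=r\|b\|_2+\alpha\sum_n b_n$ and form the quotient; by the choice of thresholds the ``$r$-only'' parts agree up to $1+o(1)$, and the quotient reduces to an infinite product $\prod_{n\ge1}\psi_n(r)$ in which the $n$-th factor depends only on $r$ and on the pair $\bigl(a_n/\|a\|_2,\ b_n/\|b\|_2\bigr)$. The task then is to show $\prod_{n\ge1}\psi_n(r)\to1$ as $r\to\infty$.

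That last step is the heart of the matter and the place I expect the real difficulty. Writing $x_n=\dfrac{a_n/\|a\|_2}{b_n/\|b\|_2}\to1$, each factor has an expansion $\psi_n(r)=1+c_1(r)(1-x_n)+c_2(r)(1-x_n)^2+O\bigl(|1-x_n|^3\bigr)$ with $c_1(r),c_2(r)$ bounded uniformly in $r$. When $\alpha=0$ the symmetry of the half-normal law lets the quotient collapse, after elementary manipulation, onto the product occurring in condition (\ref{a-n-b-n-converge}); when $\alpha>0$ that collapse fails and one must keep the expansion, which forces one to assume simultaneously that $\sum_n(1-x_n)$ converges and that $\sum_n(1-x_n)^2<\infty$ --- this is hypothesis (\ref{a-n-b-n-converge-regular-normal}), equivalent to (\ref{a-n-b-n-converge}) but in the form suited to the ``more complicated terms'' the text alludes to. The two inequalities of Lemma~\ref{core-lemma-1} and Lemma~\ref{core-lemma-2} are exactly what is needed here: one dominates the factors $\psi_n(r)$ uniformly in $r$ by a summable bound, so that $\prod_n\psi_n(r)$ converges uniformly in $r$, and the other controls the only conditionally convergent piece, $\sum_n c_1(r)(1-x_n)$, by summation by parts, using that $\{a_n/\|a\|_2\}$ and $\{b_n/\|b\|_2\}$ are non-increasing. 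With uniform convergence in hand one interchanges $\lim_{r\to\infty}$ with the product; each factor tends to its ``diagonal'' value (the single-term identity $\mathbb{P}\{a_1|Z_1|\ge ra_1+\alpha a_1\}=\mathbb{P}\{|Z_1|\ge r+\alpha\}$ is the prototype of this cancellation) and the surviving product equals $1$, which, together with the $1+o(1)$ already set aside, proves the theorem.
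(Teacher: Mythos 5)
Your outline follows the paper's route exactly: apply Lifshits' equivalent form (Lemma~\ref{Lifshits-Theorem2}) to both series, reduce the ratio of probabilities to the infinite product $\prod_n h(ra_n/\|a\|_2)/h(rb_n/\|b\|_2)$ with $h(x)=\widehat\varepsilon(x,\alpha/\beta)$, Taylor-expand each factor to second order, and control the resulting product uniformly in $r$ via Lemma~\ref{core-lemma-2} together with the summation-by-parts Lemma~\ref{lemma5Gao}. So the approach is the right one and matches the paper; what follows are the points where your sketch is imprecise rather than wrong. First, Lemma~\ref{Lifshits-Theorem2} is stated directly for $N(\alpha,\beta^2)$ with $\widehat\varepsilon$ given explicitly; there is no exponent $p=2$, $c=1/2$ to ``feed in'' --- that data belongs to the logarithmic-level Theorem~\ref{rough-comparison}, not here. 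Second, the linear coefficient in your expansion is not a function $c_1(r)$ of $r$ alone: it is $g(rb_n)$ with $g(x)=x[\Phi'(x+|\alpha/\beta|)+f'(x)]/h(x)$, and its dependence on $n$ through $rb_n$ is precisely why Lemma~\ref{lemma5Gao} (bounded variation of $g$ plus monotonicity of $\{rb_n\}$) is needed rather than pulling a constant out of the conditionally convergent sum $\sum_n(1-x_n)$. Third, only Lemma~\ref{core-lemma-2} enters this proof (Lemma~\ref{core-lemma-1} is the special case used for Theorem~\ref{exact-standard-normal}); its role is not to give two separate bounds but to convert the additive estimate $h(ra_n)/h(rb_n)\le 1+g(rb_n)\delta_n+\gamma_n$ into the multiplicative one $(1+\delta_n)^{g(rb_n)}(1+\delta_n^2)(1+\gamma_n)^2$, which cleanly separates the conditionally convergent piece (handled by Lemma~\ref{lemma5Gao}) from the absolutely summable quadratic pieces (handled by $1+x\le e^x$ and the hypothesis $\sum(1-x_n)^2<\infty$). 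Finally, your parenthetical claim that (\ref{a-n-b-n-converge-regular-normal}) is equivalent to (\ref{a-n-b-n-converge}) is stronger than what the paper establishes (the Appendix proves only one implication via Wermuth's theorem), so it should not be asserted without proof. None of these affects the viability of the strategy, but the decisive computation --- the explicit expansion of $h$ and the uniform bound on the second-order remainder $|r^2b_n^2f''(rc_{2,n})|$ --- is left ungestured and is where the actual work lies.
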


\subsection{Proofs of Theorem \ref{exact-standard-normal} and Theorem \ref{exact-general-normal}}
The function $\Phi$ stands for the distribution function of a standard Gaussian random variable
$$\Phi(x)=\int_{-\infty}^x\frac{1}{\sqrt{2\pi}}e^{-u^2/2}du.$$
The first lemma is our starting point.
\begin{lemma}[\cite{Lifshits-1994}]\label{Lifshits-Theorem2} Let $\{\xi_n\}$ be a sequence of i.i.d. Gaussian random variables $N(\alpha,\beta^2),$ and $\{a_n\}$ be a sequence of positive real numbers such that $\sum_{n=1}^{\infty}a_n<\infty.$ Then as $r\rightarrow\infty$
\begin{equation}\label{lemma-Lifshits-Theorem2}
\begin{aligned}
\mathbb{P}&\left\{\sum_{n=1}^{\infty}a_n|\xi_n|\geq r\right\}\\
&\sim\prod_{n=1}^{\infty}\widehat{\varepsilon}\left(\frac{a_n(r-|\alpha|\sum_{n=1}^{\infty}a_n)}{||a||_2^2\beta},\frac{\alpha}{\beta}\right)\cdot\left[1-\Phi\left(\frac{r-|\alpha|\sum_{n=1}^{\infty}a_n}{||a||_2\beta}\right)\right]
\end{aligned}
\end{equation}
where $\widehat{\varepsilon}(x,y)=\Phi(x+|y|)+\exp\{-2x|y|\}\Phi(x-|y|).$
\end{lemma}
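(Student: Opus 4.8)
The plan is to treat $S:=\sum_{n=1}^{\infty}a_n|\xi_n|$ by an exponential change of measure followed by a Bahadur--Rao type sharp large deviation expansion; the decisive feature here is that the associated saddle point $\lambda^{*}=\lambda^{*}(r)$ diverges as $r\to\infty$, which is exactly why the correction factor — a trivial $O(1)$ in the classical i.i.d.\ statement — turns into the nontrivial infinite product and the Gaussian tail appearing in \eqref{lemma-Lifshits-Theorem2}. Since $|\xi_n|$ has the same law as $|{-}\xi_n|$ we may assume $\alpha\ge0$. First I would compute the moment generating function of one summand by completing the square separately on $\{x>0\}$ and $\{x<0\}$, which yields
\[
\mathbb{E}\,e^{\lambda a_n|\xi_n|}=\exp\!\Big(\tfrac{\beta^2\lambda^2a_n^2}{2}+\lambda\alpha a_n\Big)\,\widehat{\varepsilon}\!\left(\beta\lambda a_n,\tfrac{\alpha}{\beta}\right).
\]
By independence, monotone convergence and $\sum_n a_n<\infty$ (which also forces $\|a\|_2<\infty$), it follows that $\psi(\lambda):=\log\mathbb{E}\,e^{\lambda S}=\tfrac{\beta^2\lambda^2}{2}\|a\|_2^2+\lambda\alpha\sum_n a_n+\sum_n\log\widehat{\varepsilon}(\beta\lambda a_n,\alpha/\beta)$ is finite for every $\lambda\ge0$, the last series converging because $\widehat{\varepsilon}(0,y)=1$ and $\widehat{\varepsilon}(x,y)-1=O(x)$ as $x\to0$.

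Next I would analyze the saddle point $\lambda^{*}$ defined by $\psi'(\lambda^{*})=r$, which exists and is unique for large $r$ since $\psi'$ increases from $\mathbb{E}[S]$ to $+\infty$. Writing $R:=r-\alpha\sum_n a_n$ and $\tilde\lambda:=R/(\beta^2\|a\|_2^2)$, the equation rearranges to $\lambda^{*}=\tilde\lambda-(\beta\|a\|_2^2)^{-1}\sum_n a_n\,(\widehat{\varepsilon}_x/\widehat{\varepsilon})(\beta\lambda^{*}a_n,\alpha/\beta)$. Since $\widehat{\varepsilon}_x/\widehat{\varepsilon}$ is bounded and, for each fixed $n$, tends to $0$ as $\lambda^{*}\to\infty$, dominated convergence gives $\lambda^{*}\to\infty$ and, more precisely, $\lambda^{*}-\tilde\lambda\to0$ as $r\to\infty$; the same argument applied to $\psi''(\lambda)=\operatorname{Var}_{\lambda}(S)=\beta^2\|a\|_2^2+\beta^2\sum_n a_n^2(\log\widehat{\varepsilon})''(\beta\lambda a_n)$ gives $\psi''(\lambda^{*})\to\beta^2\|a\|_2^2\in(0,\infty)$.

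The main step is then to establish the Bahadur--Rao asymptotic $\mathbb{P}(S\ge r)\sim\big(\lambda^{*}\sqrt{2\pi\,\psi''(\lambda^{*})}\big)^{-1}e^{\psi(\lambda^{*})-\lambda^{*}r}$. Granting it, three elementary estimates using $\lambda^{*}-\tilde\lambda\to0$ finish the proof: the cross term cancels by the definition of $\tilde\lambda$, so $e^{\beta^2(\lambda^{*})^2\|a\|_2^2/2-\lambda^{*}R}=(1+o(1))\,e^{-R^2/(2\beta^2\|a\|_2^2)}$; next $\big(\lambda^{*}\sqrt{2\pi\psi''(\lambda^{*})}\big)^{-1}\sim\beta\|a\|_2/(R\sqrt{2\pi})$, so by $1-\Phi(u)\sim\phi(u)/u$ the resulting scalar prefactor is asymptotic to $1-\Phi(R/(\beta\|a\|_2))$; and by the mean value theorem together with boundedness of $\widehat{\varepsilon}_x/\widehat{\varepsilon}$ and $\sum_n a_n<\infty$ one gets $\prod_n\widehat{\varepsilon}(\beta\lambda^{*}a_n,\alpha/\beta)\sim\prod_n\widehat{\varepsilon}(\beta\tilde\lambda a_n,\alpha/\beta)$. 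Since $\beta\tilde\lambda a_n=a_n R/(\beta\|a\|_2^2)$ this last product is precisely the one in \eqref{lemma-Lifshits-Theorem2}, and multiplying the three pieces gives the claim.

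The hard part is the Bahadur--Rao step in this setting: the classical theorem is for i.i.d.\ summands, whereas the $a_n|\xi_n|$ are merely independent, infinitely many, and after tilting by $\lambda^{*}\to\infty$ form a triangular array. I would first truncate at a slowly growing $N=N(r)$, bounding the contribution of $\sum_{n>N}a_n|\xi_n|$ — equivalently, of the convergent tail $\prod_{n>N}\widehat{\varepsilon}\to1$ — to be negligible, and then prove for the finite tilted sum a local limit theorem: the density of $S-r$ converges, uniformly on a neighbourhood of $0$, to $(\beta\|a\|_2\sqrt{2\pi})^{-1}e^{-t^2/(2\beta^2\|a\|_2^2)}$. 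This local control is exactly what is needed, since in $\mathbb{E}_{\lambda^{*}}\!\big[e^{-\lambda^{*}(S-r)}\mathbf{1}_{\{S\ge r\}}\big]=e^{-\lambda^{*}r}\!\int_0^\infty e^{-\lambda^{*}t}\,(\text{density of }S-r)\,dt$ the weight $e^{-\lambda^{*}t}$ concentrates on $t=O(1/\lambda^{*})\to0$, producing the factor $1/(\lambda^{*}\beta\|a\|_2\sqrt{2\pi})$. Verifying the Lyapunov/Lindeberg bound and the non-lattice (characteristic-function decay) estimate for the tilted summands, uniformly along $\lambda=\lambda^{*}(r)$, is where the real technical effort lies; an alternative is to use the representation $\sum_n a_n|\xi_n|=\sup_{\epsilon\in\{\pm1\}^{\mathbb N}}\sum_n a_n\epsilon_n\xi_n$ and run a Laplace-type analysis of this Gaussian field — of constant variance $\beta^2\|a\|_2^2$ and mean $\alpha\sum_n a_n\epsilon_n$ — near its mean-maximizing configuration $\epsilon\equiv\operatorname{sgn}\alpha$.
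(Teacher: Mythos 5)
The paper offers no proof of this lemma: it is quoted verbatim from \cite{Lifshits-1994}, whose method is precisely the Laplace-transform/exponential-tilting route you adopt, so your architecture is the right one. Your preparatory computations are correct and are the heart of the matter: the factorization $\mathbb{E}e^{\lambda a|\xi|}=\exp\{\beta^2\lambda^2a^2/2+\lambda|\alpha| a\}\,\widehat{\varepsilon}(\beta\lambda a,\alpha/\beta)$ is exactly where $\widehat{\varepsilon}$ comes from; the saddle-point estimates $\lambda^{*}-\tilde\lambda\to0$ and $\psi''(\lambda^{*})\to\beta^2\|a\|_2^2$ follow as you say because the relevant derivatives of $\log\widehat{\varepsilon}$ are bounded on $x\ge0$ and vanish as $x\to\infty$, with $\sum_n a_n<\infty$ providing the dominating series; and the final assembly is clean, the key point being that $f(\lambda)=\beta^2\lambda^2\|a\|_2^2/2-\lambda R$ is stationary at $\tilde\lambda$, so $f(\lambda^{*})-f(\tilde\lambda)=\tfrac{1}{2}\beta^2\|a\|_2^2(\lambda^{*}-\tilde\lambda)^2\to0$ and the substitution of $\tilde\lambda$ for $\lambda^{*}$ costs only a factor $1+o(1)$ in the exponential, the prefactor, and the infinite product.

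The genuine gap is the one you flag yourself: the Bahadur--Rao identity $\mathbb{E}_{\lambda^{*}}\bigl[e^{-\lambda^{*}(S-r)}\mathbf{1}_{\{S\ge r\}}\bigr]\sim\bigl(\lambda^{*}\sqrt{2\pi\psi''(\lambda^{*})}\bigr)^{-1}$ is asserted rather than proved, and all the analytic content of the lemma lives there. Beyond the general warning, one concrete point: the truncation you propose does not work as stated, because under the tilted measure the discarded tail $\sum_{n>N}a_n|\xi_n|$ has mean of order $\beta^2\lambda^{*}\sum_{n>N}a_n^2$, which for fixed $N$ grows linearly in $r$; the tail cannot be treated as negligible, only as a summand whose centered contribution to the variance is small, so either $N(r)$ must grow fast enough that $\lambda^{*}\sum_{n>N(r)}a_n^2\to0$ or, more cleanly, you should prove the local limit theorem for the full sum directly. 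The latter is feasible here because the tilted law of $a_n|\xi_n|$ is an explicit mixture of $N\bigl(a_n(\pm\alpha+\beta^2\lambda a_n),a_n^2\beta^2\bigr)$ restricted to the positive half-line, with the restriction becoming negligible as $\lambda\to\infty$, so the Lindeberg condition and the characteristic-function integrability (supplied by the first one or two summands) can be verified uniformly in $\lambda$. Note also that the Laplace integral $\int_0^{\infty}e^{-\lambda^{*}t}(\text{density of }S-r)\,dt$ requires uniform control of the density on a window of width $O(1/\lambda^{*})$ about $0$, not just pointwise convergence; until that uniform local estimate is written out, the proof is an outline rather than a proof.
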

\begin{lemma}[Lemma 5 in \cite{Gao-Hannig-Torcaso}]\label{lemma5Gao}
Suppose $\{c_n\}$ is a sequence of real numbers such that $\sum_{n=1}^{\infty}c_n$ converges, and $g$ has total variation $D$ on $[0,\infty).$ Then, for any monotonic non-negative sequence $\{d_n\},$
$$\left|\sum_{n\geq N}c_n g(d_n)\right|\leq (D+\sup_x|g(x)|)\sup_{k>N}\left|\sum_{n=N}^k c_n\right|.$$
\end{lemma}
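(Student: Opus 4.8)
The plan is to obtain the estimate from summation by parts (Abel summation), arranged so that the partial sums $\sum_{n=N}^{k}c_n$ carry all the magnitude while the successive increments of $g$ along the sequence $(d_n)$ contribute only the total variation $D$. First I would fix notation: put $S_k=\sum_{n=N}^{k}c_n$ (with $S_{N-1}=0$) and let $\sigma$ denote the supremum on the right-hand side of the claimed inequality. Since $\sum_{n}c_n$ converges, $(S_k)$ converges as $k\to\infty$, so $\sigma<\infty$ and $|S_\infty|:=|\lim_k S_k|\le\sigma$. Note also that finite total variation on $[0,\infty)$ forces $g$ to be bounded, $G:=\sup_x|g(x)|\le|g(0)|+D<\infty$, so the right-hand side $\sigma(D+G)$ is finite.

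The argument then rests on two observations. The first is that precomposition with a \emph{monotone} sequence does not inflate the variation of $g$: because $(d_n)$ is monotone and lies in $[0,\infty)$, the points $d_N,d_{N+1},\dots$ are visited in monotone order, so for every $M$
\[
\sum_{n=N}^{M}\bigl|g(d_{n+1})-g(d_n)\bigr|\ \le\ \mathrm{Var}_{[0,\infty)}(g)=D,
\]
and hence $\sum_{n\ge N}\bigl|g(d_{n+1})-g(d_n)\bigr|$ converges, with sum at most $D$. The second is the Abel identity, obtained by writing $c_n=S_n-S_{n-1}$ and reindexing: for every $M>N$
\[
\sum_{n=N}^{M}c_n\,g(d_n)=S_M\,g(d_M)+\sum_{n=N}^{M-1}S_n\bigl(g(d_n)-g(d_{n+1})\bigr).
\]
Bounding $|S_k|\le\sigma$ and applying the first observation gives, for every $M$,
\[
\Bigl|\sum_{n=N}^{M}c_n\,g(d_n)\Bigr|\le \sigma G+\sigma\sum_{n=N}^{M-1}\bigl|g(d_{n+1})-g(d_n)\bigr|\le\sigma(G+D).
\]

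Finally I would let $M\to\infty$. On the right of the Abel identity, the series $\sum_{n\ge N}S_n\bigl(g(d_n)-g(d_{n+1})\bigr)$ converges absolutely by the first observation together with $|S_n|\le\sigma$, while $S_M\to S_\infty$ and $g(d_M)$ converges (the monotone sequence $(d_M)$ tends to some $d_\infty\in[0,\infty]$, and the relevant one-sided limit of the bounded-variation function $g$ there exists); hence $\sum_{n\ge N}c_n g(d_n)$ converges and inherits the bound $\sigma(G+D)$, which is the assertion. The computation is routine; the only steps that genuinely need attention are the variation estimate—monotonicity of $(d_n)$ is indispensable, since an oscillating $(d_n)$ that repeatedly re-enters a region where $g$ varies could push $\sum_n|g(d_{n+1})-g(d_n)|$ above $D$—and the limit bookkeeping, i.e.\ checking that the rearranged series converges so that $M\to\infty$ may be taken legitimately.
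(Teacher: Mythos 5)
Your proof is correct and is essentially the argument behind the cited Lemma 5 of \cite{Gao-Hannig-Torcaso} (which this paper imports without reproving): Abel summation with the partial sums $S_k=\sum_{n=N}^k c_n$ absorbing the magnitude and the monotonicity of $\{d_n\}$ guaranteeing $\sum_n|g(d_{n+1})-g(d_n)|\le D$. The only cosmetic mismatch is that your bound uses $|S_N|\le\sigma$ while the statement takes the supremum over $k>N$ only, a quirk of the lemma's wording rather than a flaw in your argument.
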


As mentioned in the introduction, the key step of the proofs is to come up with suitable inequalities that can be used for the function $\widehat{\varepsilon}(x,y)$ in Lemma \ref{Lifshits-Theorem2}. For the proof of Theorem \ref{exact-standard-normal}, we need the following
\begin{lemma}\label{core-lemma-1}
For $a\leq0$ and small enough $\delta,$ we have
$$1+a\cdot \delta\leq (1+\delta)^a.$$
\end{lemma}
The proof of this lemma is trivial. The proof of Theorem \ref{exact-general-normal} requires a more complicated inequality as follows.
\begin{lemma}\label{core-lemma-2}
For a fixed $\sigma>0$ and any $\gamma>0,$ there is a constant $\lambda(\sigma)$ only depending on $\sigma$ such that for any $|a|\leq \sigma$ and $|\delta|\leq \lambda,$
$$1+a\cdot \delta+\gamma\leq (1+\delta)^a(1+\delta^2)(1+\gamma)^2.$$
\end{lemma}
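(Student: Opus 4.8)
The plan is to decouple the roles of $\gamma$ and of $(a,\delta)$. Fix $a,\delta$ with $|a|\le\sigma$ and $|\delta|\le\lambda$ (the value $\lambda=\lambda(\sigma)$ to be chosen at the end), and write $M=M(a,\delta):=(1+\delta)^a(1+\delta^2)>0$. Consider $h(\gamma)=M(1+\gamma)^2-(1+a\delta+\gamma)$ on $[0,\infty)$. Since $h'(\gamma)=2M(1+\gamma)-1\ge 2M-1$ for $\gamma\ge0$, once we know $M\ge\tfrac12$ the function $h$ is nondecreasing, so $h(\gamma)\ge h(0)=M-1-a\delta$ for all $\gamma>0$. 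Hence the lemma will follow once we verify, uniformly over $|a|\le\sigma$ and $|\delta|\le\lambda$, the two one-variable inequalities
$$ (1+\delta)^a(1+\delta^2)\ \ge\ \tfrac12 \qquad\text{and}\qquad (1+\delta)^a(1+\delta^2)\ \ge\ 1+a\delta. $$

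The first inequality is immediate: writing $(1+\delta)^a=e^{a\log(1+\delta)}$ and bounding $|a\log(1+\delta)|\le\sigma|\log(1+\delta)|$, we get $(1+\delta)^a\ge\tfrac12$ for all $|a|\le\sigma$ as soon as $\lambda$ is small, while $1+\delta^2\ge1$. The second inequality is where the correction factor $1+\delta^2$ is essential, since Bernoulli's inequality $(1+\delta)^a\ge1+a\delta$ fails for $0<a<1$. Set $g(\delta)=(1+\delta)^a(1+\delta^2)$; then $g(0)=1$, $g'(0)=a$, and a direct computation gives $g''(\delta)=(1+\delta)^{a-2}\,Q_a(\delta)$ with
$$ Q_a(\delta)=a(a-1)(1+\delta^2)+4a\delta(1+\delta)+2(1+\delta)^2=(a^2-a+2)+(4a+4)\delta+(a^2+3a+2)\delta^2. $$
The constant term $a^2-a+2$ has negative discriminant, hence is $\ge\tfrac74$ for every real $a$, while the remaining terms of $Q_a$ are bounded in modulus by $C(\sigma)(|\delta|+\delta^2)$ for $|a|\le\sigma$. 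Thus, shrinking $\lambda(\sigma)$ if necessary, $Q_a(\delta)\ge\tfrac78>0$ and therefore $g''(\delta)\ge0$ on $[-\lambda,\lambda]$ for all such $a$. Convexity of $g$ there (equivalently, Taylor's theorem with Lagrange remainder, $g(\delta)=1+a\delta+\tfrac12 g''(\theta)\delta^2$ for some $\theta$ between $0$ and $\delta$) gives $g(\delta)\ge1+a\delta$, which is the second inequality.

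Finally, one chooses $\lambda=\lambda(\sigma)$ small enough that both displayed inequalities hold for all $|a|\le\sigma$, $|\delta|\le\lambda$; then $M\ge\tfrac12$ makes $h$ nondecreasing on $[0,\infty)$ and $M\ge1+a\delta$ makes $h(0)\ge0$, so $h(\gamma)\ge0$, i.e. $1+a\delta+\gamma\le(1+\delta)^a(1+\delta^2)(1+\gamma)^2$, for every $\gamma>0$. I expect the two slightly delicate points to be the reduction in $\gamma$ — recognizing that monotonicity in $\gamma$ reduces the problem to the case $\gamma=0$ — and the uniformity over $|a|\le\sigma$, which ultimately rests on the algebraic observation that $a^2-a+2$ is bounded away from $0$ regardless of the sign of $a$; everything else is routine estimation.
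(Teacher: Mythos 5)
Your proof is correct and follows essentially the same route as the paper's: the reduction in $\gamma$ via monotonicity of $h(\gamma)=M(1+\gamma)^2-(1+a\delta+\gamma)$ is exactly the paper's function $q(\gamma)$, and the core inequality $(1+\delta)^a(1+\delta^2)\geq 1+a\delta$ is in both cases reduced by elementary calculus to the positivity of a quadratic in $\delta$ whose constant term is $a^2-a+2\geq 7/4$. The only cosmetic difference is that you establish that inequality via convexity of $g(\delta)=(1+\delta)^a(1+\delta^2)$ (second derivative), whereas the paper differentiates $a\log(1+\delta)+\log(1+\delta^2)-\log(1+a\delta)$ once and shows the derivative has the sign of $\delta$.
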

The proof of Lemma \ref{core-lemma-2} is elementary (but not trivial) which is given at the end of this section.

\begin{proof}[Proof of Theorem \ref{exact-standard-normal}]
By otherwise considering $\tilde{a}_n=a_n/\|a\|_2$ and $\tilde{b}_n=b_n/\|b\|_2,$ we assume that $\|a\|_2=\|b\|_2=1.$ It follows from Lemma \ref{Lifshits-Theorem2} that
$$\mathbb{P}\left\{\sum_{n=1}^{\infty}a_n|Z_n|\geq r\right\}\sim\prod_{n=1}^{\infty}2\Phi\left(ra_n\right)\cdot\left[1-\Phi\left(r\right)\right].$$
Therefore,
\begin{align*}
\frac{\mathbb{P}\left\{\sum_{n=1}^{\infty}a_n|Z_n|\geq r\right\}}{\mathbb{P}\left\{\sum_{n=1}^{\infty}b_n|Z_n|\geq r\right\}}\sim\prod_{n=1}^{\infty}\frac{\Phi\left(ra_n\right)}{\Phi\left(rb_n\right)}.
\end{align*}
Now we prove that $\prod_{n=N}^{\infty}\frac{\Phi\left(ra_n\right)}{\Phi\left(rb_n\right)}$ tends to $1$ as $N\rightarrow\infty$ uniformly in $r.$ Then the limit of $\prod_{n=1}^{\infty}\frac{\Phi\left(ra_n\right)}{\Phi\left(rb_n\right)}$ as $r\rightarrow\infty$ is equal to $1$ since the limit of each $\frac{\Phi\left(ra_n\right)}{\Phi\left(rb_n\right)}$ as $r\rightarrow\infty$ is $1.$

By applying Taylor's expansion to $\Phi$ up to the second order, we have
\begin{align*}
\Phi\left(ra_n\right)=&\Phi\left(rb_n\right)+\Phi'\left(rb_n\right)\left(ra_n-rb_n\right)\\
&+\frac{\Phi''(rc_n)}{2}\left(ra_n-rb_n\right)^2
\end{align*}
where $c_n$ is between $a_n$ and $b_n.$ It follows from $\Phi''(rc_n)\leq0$ that
\begin{align*}
\frac{\Phi\left(ra_n\right)}{\Phi\left(rb_n\right)}
\leq1+\frac{rb_n\Phi'\left(rb_n\right)}{\Phi\left(rb_n\right)}\left(\frac{a_n}{b_n}-1\right).
\end{align*}
Let us introduce a new function $g(x)=-\frac{x\Phi'(x)}{\Phi(x)}.$ Now we apply Lemma \ref{core-lemma-1} with $a=g(rb_n)$ to get
\begin{align*}
\frac{\Phi\left(ra_n\right)}{\Phi\left(rb_n\right)}
\leq\left(2-\frac{a_n}{b_n}\right)^{g(rb_n)}.
\end{align*}
It then follows from Lemma \ref{lemma5Gao} that
\begin{align*}
\prod_{n\geq N}\frac{\Phi\left(ra_n\right)}{\Phi\left(rb_n\right)}
&\leq\exp\left\{\sum_{n\geq N}g(rb_n)\log\left(2-\frac{a_n}{b_n}\right)\right\}\\
&\leq\exp\left\{(D+\sup_x|g(x)|)\sup_{k>N}\left|\sum_{n=N}^k\log\left(2-\frac{a_n}{b_n}\right)\right|\right\}\\
\end{align*}
which tends to $1$ uniformly in $r$ from condition (\ref{a-n-b-n-converge}). Thus
\begin{align*}
\limsup_{N\rightarrow\infty}\prod_{n\geq N}\frac{\Phi\left(ra_n\right)}{\Phi\left(rb_n\right)}
\leq 1.
\end{align*}
Similarly,
\begin{align*}
\limsup_{N\rightarrow\infty}\prod_{n\geq N}\frac{\Phi\left(rb_n\right)}{\Phi\left(ra_n\right)}
\leq 1
\end{align*}
which completes the proof.
\end{proof}

\begin{proof}[Proof of Theorem \ref{exact-general-normal}]
From Lemma \ref{Lifshits-Theorem2} we get
\begin{align*}
\frac{\mathbb{P}\left\{\sum_{n=1}^{\infty}a_n|\xi_n|\geq r\|a\|_2\beta+|\alpha|\sum_{n=1}^{\infty} a_n\right\}}{\mathbb{P}\left\{\sum_{n=1}^{\infty}b_n|\xi_n|\geq r\|b\|_2\beta+|\alpha|\sum_{n=1}^{\infty} b_n\right\}}\sim\prod_{n=1}^{\infty}\frac{h(ra_n/\|a\|_2)}{h(rb_n/\|b\|_2)}
\end{align*}
where $h(x)=\Phi(x+|\alpha/\beta|)+\exp\{-2x|\alpha/\beta|\}\Phi(x-|\alpha/\beta|).$ Without loss of generality, we assume $\|a\|_2=\|b\|_2=1.$ We use the notation $f(x)=\exp\{-2x|\alpha/\beta|\}\Phi(x-|\alpha/\beta|),$ thus
$$h(ra_n)=\Phi(ra_n+|\alpha/\beta|)+f(ra_n).$$
Now we apply Taylor's expansions to $\Phi$ at point $rb_n+|\alpha/\beta|$, and to $f$ at point $rb_n$ both up to the second order, so
\begin{align*}
h(ra_n)=&\Phi(rb_n+|\alpha/\beta|)+rb_n\Phi'(rb_n+|\alpha/\beta|)\left(\frac{a_n}{b_n}-1\right)\\
&\qquad\qquad\qquad+\Phi''(rc_{1,n}+|\alpha/\beta|)\left(ra_n-rb_n\right)^2/2\\
&+f(rb_n)+rb_nf'(rb_n)\left(\frac{a_n}{b_n}-1\right)+\frac{r^2b_n^2f''(rc_{2,n})}{2}\left(\frac{a_n}{b_n}-1\right)^2
\end{align*}
where $c_{1,n}$ and $c_{2,n}$ are between $a_n$ and $b_n.$ Because $\Phi''\leq0,$
\begin{align*}
h(ra_n)\leq&h(rb_n)+rb_n\left[\Phi'(rb_n+|\alpha/\beta|)+f'(rb_n)\right]\left(\frac{a_n}{b_n}-1\right)\\
&\qquad\qquad\qquad+\frac{r^2b_n^2f''(rc_{2,n})}{2}\left(\frac{a_n}{b_n}-1\right)^2.
\end{align*}
Taking into account that $\left|r^2b_n^2f''(rc_{2,n})\right|\leq 2c(|\alpha/\beta|)$ for large $N$ uniformly in $r$ with some positive constant $c$ depending on $|\alpha/\beta|,$ we have
\begin{align*}
\frac{h(ra_n)}{h(rb_n)}\leq1+\frac{rb_n\left[\Phi'(rb_n+|\alpha/\beta|)+f'(rb_n)\right]}{h(rb_n)}\left(\frac{a_n}{b_n}-1\right)+c\left(\frac{a_n}{b_n}-1\right)^2.
\end{align*}
The function $g(x):=x\left[\Phi'(x+|\alpha/\beta|)+f'(x)\right]/h(x)$ is bounded and continuously differentiable on $[0,\infty)$ with a bounded derivative. Therefore it follows from Lemma \ref{core-lemma-2} that
\begin{align*}
\frac{h(ra_n)}{h(rb_n)}\leq\left(\frac{a_n}{b_n}\right)^{g(rb_n)}\left(1+\left(\frac{a_n}{b_n}-1\right)^2\right)\left(1+c\left(\frac{a_n}{b_n}-1\right)^2\right)^2.
\end{align*}
By taking the infinite product, we get
\begin{align*}
\prod_{n=N}^{\infty}\frac{h(ra_n)}{h(rb_n)}\leq\prod_{n=N}^{\infty}\left(\frac{a_n}{b_n}\right)^{g(rb_n)}\prod_{n=N}^{\infty}\left(1+\left(\frac{a_n}{b_n}-1\right)^2\right)\left(1+c\left(\frac{a_n}{b_n}-1\right)^2\right)^2.
\end{align*}
According to Lemma \ref{lemma5Gao}, the first product
\begin{align*}
\prod_{n=N}^{\infty}\left(\frac{a_n}{b_n}\right)^{g(rb_n)}&=\exp\left\{\sum_{n\geq N}g(rb_n)\log\left(\frac{a_n}{b_n}\right)\right\}\\
&\leq\exp\left\{(D+\sup_x|g(x)|)\sup_{k>N}\left|\sum_{n=N}^k \log\left(\frac{a_n}{b_n}\right)\right|\right\}
\end{align*}
which tends to $1$ because the series $\sum_{n=1}^{\infty} \log\left(\frac{a_n}{b_n}\right)$ is convergent (this is from condition (\ref{a-n-b-n-converge-regular-normal}), see Appendix for more details).

For the second product, we use $1+x\leq e^x$ to get
\begin{align*}
&\prod_{n=N}^{\infty}\left(1+\left(\frac{a_n}{b_n}-1\right)^2\right)\left(1+c\left(\frac{a_n}{b_n}-1\right)^2\right)^2\\
&\leq\exp\left\{(1+2c)\sum_{n\geq N}\left(\frac{a_n}{b_n}-1\right)^2\right\}
\end{align*}
and this  tends to $1$ because of (\ref{a-n-b-n-converge-regular-normal}). Thus
\begin{align*}
\limsup_{N\rightarrow\infty}\prod_{n=N}^{\infty}\frac{h(ra_n)}{h(rb_n)}\leq 1.
\end{align*}
We can similarly prove $\limsup_{N\rightarrow\infty}\prod_{n=N}^{\infty}\frac{h(rb_n)}{h(ra_n)}\leq 1$ which ends the proof.
\end{proof}

\begin{proof}[Proof of Lemma \ref{core-lemma-2}] We first show that under the assumptions of Lemma \ref{core-lemma-2}, the following inequality holds
\begin{align}\label{middle}
1+a\cdot \delta\leq (1+\delta)^a(1+\delta^2).
\end{align}
Let us consider the function $p(\delta)$ for $|\delta|<1$ and $|a\delta|<1$ defined as
$$p(\delta)=a\log(1+\delta)+\log(1+\delta^2)-\log(1+a\delta).$$
It is clear that $p(0)=0$ and
\begin{align*}
p'(\delta)=\frac{\delta}{(1+\delta)(1+\delta^2)(1+a\delta)}\left[\delta^2\left(a^2+a\right)+\delta\left(2a+2\right)+\left(a^2-a+2\right)\right]
\end{align*}
which is greater than $3/2$ for sufficiently small $\lambda_1$ depending on $\sigma$ with $|a|\leq \sigma$ and $|\delta|\leq \lambda_1,$ since $a^2-a+2\geq 7/4.$ Inequality (\ref{middle}) is thus proved.

Now we define a new function
$$q(\gamma)=(1+\delta)^a(1+\delta^2)(1+\gamma)^2-(1+a\delta+\gamma).$$
From (\ref{middle}) we have $q(0)\geq0.$ Furthermore,
$$q'(\gamma)=(1+\delta)^a(1+\delta^2)2(1+\gamma)-1$$
which can be made positive for small $\lambda_2$ depending on $\sigma$ with $|\delta|\leq \lambda_2.$ The proof is complete by taking $\lambda=\min\{\lambda_1,\lambda_2\}.$
\end{proof}

\subsection{Appropriate extensions}
By using again an equivalence form for $\mathbb{P}\left\{\sum_{n=1}^{\infty}a_n|Z_n|^p\geq r\right\}$ discussed in \cite{Lifshits-1994} with $1\leq p<2,$ we can similarly derive, without much difficulty, exact comparison for the upper tail probabilities of $\sum_{n=1}^{\infty}a_n|Z_n|^p.$ We formulate this as a proposition as follows without a proof.
\begin{prop}\label{exact-general-normal-p}
Let $\{\xi_n\}$ be a sequence of i.i.d. Gaussian random variables $N(\alpha,\beta^2),$ and $\{a_n\},\{b_n\}$ be two sequences of positive real numbers such that $\sum_{n=1}^{\infty}a_n<\infty,\sum_{n=1}^{\infty}b_n<\infty$ and
\begin{equation}\label{abs_sum}
\begin{aligned}
\sum_{n=1}^{\infty}\left|1-\frac{a_n/\sigma_a^p}{b_n/\sigma_b^p}\right|<\infty
\end{aligned}
\end{equation}
for $1\leq p<2,$ $\sigma_a=\left(\sum_{n=1}^{\infty}a_n^{m/p}\right)^{1/m}\beta$ with $m=2p/(2-p).$ Then as $r\rightarrow\infty$
\begin{align*}
\mathbb{P}&\left\{\sum_{n=1}^{\infty}a_n|\xi_n|^p\geq \left(r\sigma_a+|\alpha|\sum_{n=1}^{\infty} a_n^{1/p}\right)^p\right\}\\
&\qquad\qquad\sim\mathbb{P}\left\{\sum_{n=1}^{\infty}b_n|\xi_n|^p\geq \left(r\sigma_b+|\alpha|\sum_{n=1}^{\infty} b_n^{1/p}\right)^p\right\}.
\end{align*}
\end{prop}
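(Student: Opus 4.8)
The plan is to rerun the proof of Theorem~\ref{exact-general-normal}, replacing Lemma~\ref{Lifshits-Theorem2} by the $|\xi_n|^p$-analogue of the Lifshits equivalence from \cite{Lifshits-1994}, valid for $1\le p<2$. That equivalence has the same shape as (\ref{lemma-Lifshits-Theorem2}): for the level appearing in the statement, which is chosen precisely so as to absorb the mean shift, one has, as $r\to\infty$,
\begin{align*}
\mathbb{P}\left\{\sum_{n=1}^{\infty}a_n|\xi_n|^p\geq\left(r\sigma_a+|\alpha|\sum_{n=1}^{\infty}a_n^{1/p}\right)^p\right\}\sim\left[\prod_{n=1}^{\infty}h_p(r\check a_n)\right]\left[1-\Phi(r)\right],
\end{align*}
where $\check a_n:=a_n^{1/(2-p)}\big/\left(\sum_{k=1}^{\infty}a_k^{m/p}\right)^{1/2}$ is the $l^2$-normalisation of the sequence $\{a_n^{1/(2-p)}\}$ (so that $\sum_n\check a_n^2=1$), and $h_p$ is a fixed function built from $\Phi$, depending only on $p$ and $|\alpha/\beta|$, continuous on $[0,\infty)$ with $h_p(0)>0$; after rescaling its argument by a harmless $p$-dependent constant we may take the argument to be exactly $r\check a_n$. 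For $p=1$ one has $m=2$, $\sigma_a=\|a\|_2\beta$, $\check a_n=a_n/\|a\|_2$ and $h_p(x)=\Phi(x+|\alpha/\beta|)+\exp\{-2x|\alpha/\beta|\}\Phi(x-|\alpha/\beta|)$, so this reduces to Lemma~\ref{Lifshits-Theorem2} after the normalisation used in the proof of Theorem~\ref{exact-general-normal}. Dividing the two probabilities, the common Gaussian factor $1-\Phi(r)$ cancels and we are left with
\begin{align*}
\frac{\mathbb{P}\left\{\sum a_n|\xi_n|^p\geq\left(r\sigma_a+|\alpha|\sum a_n^{1/p}\right)^p\right\}}{\mathbb{P}\left\{\sum b_n|\xi_n|^p\geq\left(r\sigma_b+|\alpha|\sum b_n^{1/p}\right)^p\right\}}\sim\prod_{n=1}^{\infty}\frac{h_p(r\check a_n)}{h_p(r\check b_n)}.
\end{align*}
A short computation gives $\frac{a_n/\sigma_a^p}{b_n/\sigma_b^p}=\left(\check a_n/\check b_n\right)^{2-p}$, so, as $x\mapsto x^{2-p}$ is bi-Lipschitz near $x=1$, condition (\ref{abs_sum}) is equivalent to $\sum_n\big|1-\check a_n/\check b_n\big|<\infty$; in particular $\check a_n/\check b_n\to1$ and each factor $h_p(r\check a_n)/h_p(r\check b_n)\to1$ as $r\to\infty$ since $h_p(0)>0$.

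As in the proofs of Theorems~\ref{exact-standard-normal} and~\ref{exact-general-normal}, it now suffices to show that $\prod_{n\ge N}h_p(r\check a_n)/h_p(r\check b_n)\to1$ as $N\to\infty$ uniformly in $r$, together with the same statement with $\check a$ and $\check b$ interchanged. Expanding $h_p$ to second order about $r\check b_n$, dropping the leading concave term of $h_p$ (a map of the form $x\mapsto\Phi(x+\mathrm{const})$, with nonpositive second derivative) and bounding the second-order contribution of the remaining part of $h_p$ via $\sup_{x\ge0}x^2|\psi(x)|<\infty$, where $\psi$ is the second derivative of that remaining part, one obtains, for $N$ large and uniformly in $r$,
\begin{align*}
\frac{h_p(r\check a_n)}{h_p(r\check b_n)}\le1+g_p(r\check b_n)\left(\frac{\check a_n}{\check b_n}-1\right)+c\left(\frac{\check a_n}{\check b_n}-1\right)^2,
\end{align*}
where $g_p(x)=xh_p'(x)/h_p(x)$ is bounded on $[0,\infty)$ and $c=c(p,|\alpha/\beta|)>0$. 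Applying Lemma~\ref{core-lemma-2} with $a=g_p(r\check b_n)$, $\delta=\check a_n/\check b_n-1$ and $\gamma=c(\check a_n/\check b_n-1)^2$, which is legitimate once $N$ is large enough that $|\delta|\le\lambda(\|g_p\|_\infty)$, then gives
\begin{align*}
\frac{h_p(r\check a_n)}{h_p(r\check b_n)}\le\left(\frac{\check a_n}{\check b_n}\right)^{g_p(r\check b_n)}\left(1+\left(\frac{\check a_n}{\check b_n}-1\right)^2\right)\left(1+c\left(\frac{\check a_n}{\check b_n}-1\right)^2\right)^2.
\end{align*}

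Taking the product over $n\ge N$, the last two groups of factors are, by $1+x\le e^x$, at most $\exp\{(1+2c)\sum_{n\ge N}(\check a_n/\check b_n-1)^2\}$, which tends to $1$ by (\ref{abs_sum}). Since $\|g_p\|_\infty<\infty$, the first factor satisfies
\begin{align*}
\prod_{n\ge N}\left(\frac{\check a_n}{\check b_n}\right)^{g_p(r\check b_n)}=\exp\left\{\sum_{n\ge N}g_p(r\check b_n)\log\frac{\check a_n}{\check b_n}\right\}\le\exp\left\{\|g_p\|_\infty\sum_{n\ge N}\left|\log\frac{\check a_n}{\check b_n}\right|\right\},
\end{align*}
which tends to $1$ because $\sum_n\big|\log(\check a_n/\check b_n)\big|<\infty$ by (\ref{abs_sum}). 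Note that here, unlike in the proof of Theorem~\ref{exact-general-normal}, the absolute summability in (\ref{abs_sum}) lets us bypass Lemma~\ref{lemma5Gao} altogether, which is why no monotonicity of $\{a_n\},\{b_n\}$ is required in the statement. Hence $\limsup_{N\to\infty}\prod_{n\ge N}h_p(r\check a_n)/h_p(r\check b_n)\le1$ uniformly in $r$; the reciprocal estimate is obtained the same way, and, combined with the fact that every finite initial product tends to $1$ as $r\to\infty$, this completes the proof.

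The one step not already present in the proof of Theorem~\ref{exact-general-normal}, and the main obstacle, is to record the $|\xi_n|^p$-version of the Lifshits equivalence of \cite{Lifshits-1994} with its exact normalising constants $\sigma_a$, $\sum a_n^{1/p}$ and its exact function $h_p$, and to verify that $h_p$ has the two properties used above: $g_p=xh_p'/h_p$ bounded on $[0,\infty)$, and $h_p$ minus its concave $\Phi$-leading term has second derivative $\psi$ with $\sup_{x\ge0}x^2|\psi(x)|<\infty$. Both are elementary computations of exactly the kind already carried out for $f(x)=\exp\{-2x|\alpha/\beta|\}\Phi(x-|\alpha/\beta|)$ in the proof of Theorem~\ref{exact-general-normal}, merely for the somewhat bulkier $h_p$; the constant bookkeeping that turns the substitution of $\check a_n$ for $a_n$ into a verbatim copy of that proof is purely routine.
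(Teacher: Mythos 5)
The paper offers no proof of Proposition~\ref{exact-general-normal-p} at all, saying only that it follows ``similarly'' from the $1\le p<2$ equivalence form in \cite{Lifshits-1994}, and your proposal fleshes out exactly that intended argument: you identify the correct normalisation $\check a_n=a_n^{1/(2-p)}/(\sum_k a_k^{m/p})^{1/2}$ (so that $a_n/\sigma_a^p\propto\check a_n^{2-p}$ and condition (\ref{abs_sum}) becomes $\sum_n|1-\check a_n/\check b_n|<\infty$), rerun the Taylor-expansion argument of Theorem~\ref{exact-general-normal} with Lemma~\ref{core-lemma-2}, and correctly observe that absolute summability lets you bypass Lemma~\ref{lemma5Gao}, which is precisely why the proposition drops the monotonicity hypothesis. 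The one step you leave unverified --- recording the exact $p$-version of the Lifshits equivalence and checking the boundedness properties of $h_p$ --- is exactly the step the paper also delegates wholesale to the reference, so your write-up is, if anything, more complete than the paper's.
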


Based on what we have observed for Gaussian random variables so far, it is reasonable to believe that after suitable scaling, two upper tail probabilities involving $\{a_n\}$ and $\{b_n\}$ separately are equivalent. Namely, we have the following.

\textbf{Conjecture}: Under suitable conditions on $\{a_n\}$ and $\{b_n\},$ for general i.i.d. random variables $\{\xi_n\},$ the following exact comparison holds
\begin{align*}
\mathbb{P}&\left\{\sum_{n=1}^{\infty}a_n|\xi_n|\geq h\Big(rf^{\xi}(a)+g^{\xi}(a)\Big)\right\}\sim\mathbb{P}\left\{\sum_{n=1}^{\infty}b_n|\xi_n|\geq h\Big(rf^{\xi}(b)+g^{\xi}(b)\Big)\right\}
\end{align*}
for some function $h(r)$ satisfying $\lim_{r\rightarrow\infty}h(r)=\infty,$ and for two suitable scaling coefficients $f^{\xi}(a)$ and $g^{\xi}(a)$ whose values at sequence $a=\{a_n\}$ only depend on $a$ and the structure of the distribution of $\xi_1$ (such as the mean, the variance, the tail behaviors, etc).

In the next section, we show that indeed two upper tail probabilities in the logarithmic level are equivalent after some scaling. This adds more evidence of our conjecture.

\section{Logarithmic level comparison}\label{-section-logarithmic-level-comparison}
In this section, we illustrate the logarithmic level comparison for more general random variables $\{\xi_n\}$ other than the Gaussian ones.
\begin{theorem}\label{rough-comparison}
Let $\{\xi_n\}$ be a sequence of i.i.d. random variables whose common distribution satisfies $\mathbb{E}|\xi_1|<\infty$ and
\begin{align}\label{log-condition-c}
\lim_{u\rightarrow\infty}u^{-p}\log\mathbb{P}\left\{|\xi_1|\geq u\right\}=-c
\end{align}for some constants $p\geq1$ and $0<c<\infty.$ Suppose that a sequence of positive real numbers $\{a_n\}$ is such that $\sum_{n=1}^{\infty}a_n^{2\wedge q}<\infty$ with $q$ given by $\frac{1}{p}+\frac{1}{q}=1.$ Then as $r\rightarrow\infty$
\begin{align}\label{end-rough}
\log\mathbb{P}\left\{\sum_{n=1}^{\infty}a_n|\xi_n|\geq r\right\}\sim -r^p\cdot c\cdot \|a\|_q^{-p}.
\end{align}
\end{theorem}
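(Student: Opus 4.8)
The plan is to establish (\ref{end-rough}) by proving the two matching one-sided bounds $\liminf_{r\to\infty}r^{-p}\log\mathbb{P}\{\sum_{n=1}^{\infty}a_n|\xi_n|\geq r\}\geq -c\,\|a\|_q^{-p}$ and $\limsup_{r\to\infty}r^{-p}\log\mathbb{P}\{\sum_{n=1}^{\infty}a_n|\xi_n|\geq r\}\leq -c\,\|a\|_q^{-p}$. The underlying picture is the familiar large-deviation one for sums of independent variables with stretched-exponential tails: the cheapest way to force $\sum_n a_n|\xi_n|\geq r$ is to raise $|\xi_n|$ to a level $x_n$ along the profile that solves $\min\{\sum_n x_n^p:x_n\geq 0,\ \sum_n a_nx_n\geq r\}$, and by the equality case of H\"older's inequality this minimiser is $x_n\propto a_n^{q-1}$ with minimal value $r^p\|a\|_q^{-p}$; since the exponential cost of $|\xi_n|\approx x_n$ is $c\,x_n^p$, the total cost is $c\,r^p\|a\|_q^{-p}$. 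Two facts are used throughout: $\|a\|_q<\infty$ (because $\sum_n a_n^{2\wedge q}<\infty$ forces $a_n\to 0$), and $\sum_n a_n<\infty$ (needed for $\sum_n a_n|\xi_n|$ to converge almost surely, hence implicit in the hypotheses).

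For the lower bound, fix $N$ and put $x_n=r\,a_n^{q-1}\big/\sum_{m=1}^{N}a_m^{q}$ for $1\leq n\leq N$, so that $\sum_{n=1}^{N}a_nx_n=r$; using $p(q-1)=q$ and $q(p-1)=p$ one computes $\sum_{n=1}^{N}x_n^{p}=r^p\|a^{(N)}\|_q^{-p}$ with $a^{(N)}=(a_1,\dots,a_N)$. Since all summands are non-negative, $\{|\xi_n|\geq x_n\ \text{for }1\leq n\leq N\}\subseteq\{\sum_{n=1}^{\infty}a_n|\xi_n|\geq r\}$, whence by independence and (\ref{log-condition-c}) (each fixed $x_n\to\infty$ as $r\to\infty$)
\[
\log\mathbb{P}\Big\{\sum_{n=1}^{\infty}a_n|\xi_n|\geq r\Big\}\geq\sum_{n=1}^{N}\log\mathbb{P}\{|\xi_n|\geq x_n\}=-\big(c+o(1)\big)\sum_{n=1}^{N}x_n^{p}=-\big(c+o(1)\big)r^{p}\|a^{(N)}\|_q^{-p}.
\]
Dividing by $r^p$, letting $r\to\infty$, and then $N\to\infty$ (so $\|a^{(N)}\|_q\uparrow\|a\|_q$) gives the lower bound. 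For $p=1$, $q=\infty$, the profile degenerates and the argument is just the one-term estimate $\mathbb{P}\{|\xi_{n_0}|\geq r/\|a\|_\infty\}$ with $a_{n_0}=\|a\|_\infty$.

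For the upper bound I use exponential Chebyshev. With $\Lambda(\theta):=\log\mathbb{E}e^{\theta|\xi_1|}$, for $\lambda>0$ (any $\lambda>0$ when $p>1$, since then $|\xi_1|$ has all exponential moments),
\[
\mathbb{P}\Big\{\sum_{n=1}^{\infty}a_n|\xi_n|\geq r\Big\}\leq e^{-\lambda r}\prod_{n=1}^{\infty}\mathbb{E}e^{\lambda a_n|\xi_1|}=\exp\Big\{-\lambda r+\sum_{n=1}^{\infty}\Lambda(\lambda a_n)\Big\}.
\]
Condition (\ref{log-condition-c}) enters through $\Lambda$: a Laplace-method estimate of $\mathbb{E}e^{\theta|\xi_1|}=1+\int_0^{\infty}\theta e^{\theta u}\mathbb{P}\{|\xi_1|>u\}\,du$ yields $\Lambda(\theta)\leq(\kappa+\varepsilon)\theta^{q}$ for $\theta\geq\theta_1(\varepsilon)$, where $\kappa=\tfrac{1}{q}(cp)^{-q/p}$ is the Legendre conjugate constant of $t\mapsto ct^{p}$; and convexity of $\Lambda$ with $\Lambda(0)=0$ gives $\Lambda(\theta)\leq C\theta$ on $[0,\theta_1]$. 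Splitting $\sum_n\Lambda(\lambda a_n)$ according to whether $\lambda a_n>\theta_1$, and bounding the two pieces by $(\kappa+\varepsilon)\lambda^{q}\|a\|_q^{q}$ and $C\lambda\|a\|_1$, we obtain $\sum_n\Lambda(\lambda a_n)\leq(\kappa+\varepsilon)\lambda^{q}\|a\|_q^{q}+C\lambda\|a\|_1$. Minimising $-\lambda r+(\kappa+\varepsilon)\lambda^{q}\|a\|_q^{q}+C\lambda\|a\|_1$ over $\lambda>0$ — the optimiser has order $r^{p-1}$, and $C\lambda\|a\|_1=O(\lambda)$ is negligible against $\lambda^{q}$ since $q>1$ — gives $\limsup_r r^{-p}\log\mathbb{P}\{\cdots\}\leq-\tfrac{1}{p}\big(q(\kappa+\varepsilon)\big)^{-(p-1)}\|a\|_q^{-p}$, and since $\tfrac{1}{p}(q\kappa)^{-(p-1)}=c$ (because $q(p-1)=p$), letting $\varepsilon\downarrow 0$ finishes the upper bound. (Conceptually this is the Fenchel duality $\sup_\lambda\big(\lambda r-\sum_n\Lambda(\lambda a_n)\big)=\inf\{\sum_n\Lambda^{*}(y_n):\sum_n a_ny_n=r\}$ combined with the fact that the Cram\'er transform of $|\xi_1|$ obeys $\Lambda^{*}(t)\sim ct^{p}$.) The endpoint $p=1$, $q=\infty$ is separate: take $\lambda=(c-\varepsilon)\|a\|_\infty^{-1}$, so $\lambda a_n\leq c-\varepsilon$ and $\sum_n\Lambda(\lambda a_n)\leq C_\varepsilon\lambda\|a\|_1=O(1)$, whence the exponent is $-\lambda r+O(1)$ and $\varepsilon\downarrow 0$ gives $-c\|a\|_\infty^{-1}=-c\|a\|_q^{-p}$.

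The main obstacle is the upper bound. First, obtaining the sharp constant requires the precise asymptotic $\Lambda(\theta)\sim\kappa\theta^{q}$ — equivalently $\Lambda^{*}(t)\sim ct^{p}$ — and extracting this from the bare one-sided tail hypothesis (\ref{log-condition-c}) is a Laplace/Watson-lemma computation that has to be carried out carefully, though the polynomial prefactors it produces are immaterial inside the logarithm because $q>1$. Second, one must confirm that the indices with $\lambda a_n$ small do not corrupt the constant: their contribution is at most $C\lambda\sum_n a_n=o(\lambda^{q})$, which is precisely where $\sum_n a_n<\infty$ is needed; the summability hypothesis $\sum_n a_n^{2\wedge q}<\infty$ is what makes the tail sum $\sum_{n>N}\Lambda(\lambda a_n)$ small uniformly in $r$ (and, with $a_n\to 0$, guarantees $\|a\|_q<\infty$). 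One must also respect the order of limits — $r\to\infty$ before $N\to\infty$ in the lower bound; $\varepsilon,\theta_1$ fixed before $r\to\infty$ and only then relaxed in the upper bound — and treat the degenerate case $p=1$, $q=\infty$ by hand, since there the optimal profile is a single big jump and $\mathbb{E}e^{\lambda|\xi_1|}$ is finite only for $\lambda<c\|a\|_\infty^{-1}$.
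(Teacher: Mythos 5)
Your proof is correct, but it takes a genuinely different route from the paper. The paper's proof is essentially two lines: it applies Arcones's large deviation principle for random series (Lemma \ref{Arones-result}) to the centered variables $\eta_k=|\xi_k|-\mathbb{E}|\xi_k|$ with $A=[1,\infty)$, and identifies $\inf_{y\geq1}I(y)=c\|a\|_q^{-p}$ by a Lagrange-multiplier/H\"older computation. You instead re-derive the relevant instance of that LDP from scratch: your lower bound is exactly the ``optimal profile'' computation that underlies the rate-function infimum (and your constants check out: $\sum_{n\leq N}x_n^p=r^p\|a^{(N)}\|_q^{-p}$, the Legendre conjugate $\kappa=\tfrac1q(cp)^{-q/p}$ of $ct^p$, and $\tfrac1p(q\kappa)^{-(p-1)}=c$ all verify), while your upper bound is a Chernoff/Fenchel-duality argument whose one nontrivial ingredient is the Tauberian estimate $\Lambda(\theta)\leq(\kappa+\varepsilon)\theta^q$ for large $\theta$ --- essentially Kasahara's exponential Tauberian theorem --- which you only sketch and would need to write out in full (it is the main analytic work your route incurs that the paper avoids by citing \cite{Arcones}). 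What each approach buys: the paper's is short and delegates everything, including the centering that absorbs the shift $\mathbb{E}|\xi_1|\sum_na_n$, to the cited LDP; yours is self-contained, makes the single-profile mechanism transparent, and correctly isolates the degenerate case $p=1$, $q=\infty$, which the black-box route handles silently. One point you raise deserves emphasis: $\sum_na_n<\infty$ is genuinely needed (if $\sum_na_n=\infty$ then $\sum_na_n|\xi_n|=\infty$ a.s.\ and the left side of (\ref{end-rough}) is identically $0$), and it is \emph{not} implied by $\sum_na_n^{2\wedge q}<\infty$ (take $a_n=1/n$); the paper's own reduction to the centered series quietly uses this same unstated hypothesis, so this is a defect of the statement rather than of your argument, but you should state it as an explicit assumption rather than calling it ``implicit.''
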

\begin{remark}
If $\xi_1$ is the standard Gaussian random variable, then $p=2$ and $c=1/2$ in condition (\ref{log-condition-c}). If $\xi_1$ is an exponential random variable with density function $e^{-x}$ on $[0,\infty),$ then $p=c=1.$ One can easily produce more examples. It is straightforward to deduce the following comparison result from (\ref{end-rough}).
\end{remark}

\begin{cor}
Let $\{\xi_n\}$ be a sequence of i.i.d. random variables satisfying the assumptions in Theorem \ref{rough-comparison}. Suppose that two sequences of positive real numbers $\{a_n\}$ and $\{b_n\}$ satisfy $\sum_{n=1}^{\infty}a_n^{2\wedge q}<\infty$ and $\sum_{n=1}^{\infty}b_n^{2\wedge q}<\infty$ with $q$ given by $\frac{1}{p}+\frac{1}{q}=1.$ Then as $r\rightarrow\infty$
$$\frac{\log\mathbb{P}\left\{\sum_{n=1}^{\infty}a_n|\xi_n|\geq r\right\}}{\log\mathbb{P}\left\{\sum_{n=1}^{\infty}b_n|\xi_n|\geq r\right\}}\sim\left(\frac{\|b\|_q}{\|a\|_q}\right)^p$$
and
$$\frac{\log\mathbb{P}\left\{\sum_{n=1}^{\infty}a_n|\xi_n|\geq r\|a\|_q\right\}}{\log\mathbb{P}\left\{\sum_{n=1}^{\infty}b_n|\xi_n|\geq r\|b\|_q\right\}}\sim1.$$
\end{cor}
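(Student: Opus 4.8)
The plan is to obtain the corollary directly from Theorem \ref{rough-comparison} by applying it separately to the two sequences $\{a_n\}$ and $\{b_n\}$ and then dividing the resulting asymptotic relations. Before doing so I would record the harmless observation that the hypothesis $\sum_{n}a_n^{2\wedge q}<\infty$ guarantees $0<\|a\|_q<\infty$ (and likewise for $\{b_n\}$), so that the scaled thresholds $r\|a\|_q$ and $r\|b\|_q$ are well defined: positivity is immediate since the $a_n$ are positive, and finiteness holds because when $q\le 2$ one has $2\wedge q=q$ outright, while when $q>2$ the assumption $\sum_n a_n^2<\infty$ forces $a_n\to 0$, hence $a_n^q\le a_n^2$ for all large $n$ and $\|a\|_q<\infty$ (that is, $\ell^2\subset\ell^q$ for $q\ge 2$).

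For the first asymptotic, Theorem \ref{rough-comparison} yields, as $r\to\infty$,
$$\log\mathbb{P}\left\{\sum_{n=1}^{\infty}a_n|\xi_n|\geq r\right\}\sim -c\,r^p\,\|a\|_q^{-p},\qquad \log\mathbb{P}\left\{\sum_{n=1}^{\infty}b_n|\xi_n|\geq r\right\}\sim -c\,r^p\,\|b\|_q^{-p}.$$
Both right-hand sides diverge to $-\infty$, so both logarithms are eventually strictly negative; since asymptotic equivalence passes to quotients once the denominators are eventually nonzero, dividing one relation by the other gives
$$\frac{\log\mathbb{P}\{\sum_{n}a_n|\xi_n|\geq r\}}{\log\mathbb{P}\{\sum_{n}b_n|\xi_n|\geq r\}}\sim \frac{\|a\|_q^{-p}}{\|b\|_q^{-p}}=\left(\frac{\|b\|_q}{\|a\|_q}\right)^p.$$

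For the second asymptotic, I would substitute $r\mapsto r\|a\|_q$ in the first equivalence of Theorem \ref{rough-comparison} (legitimate since $r\|a\|_q\to\infty$), obtaining $\log\mathbb{P}\{\sum_n a_n|\xi_n|\geq r\|a\|_q\}\sim -c\,(r\|a\|_q)^p\|a\|_q^{-p}=-c\,r^p$, and similarly $\log\mathbb{P}\{\sum_n b_n|\xi_n|\geq r\|b\|_q\}\sim -c\,r^p$; the factor $-c\,r^p$ then cancels in the quotient, giving the stated ratio $\sim 1$. There is no genuine obstacle here: all the analytic content sits in Theorem \ref{rough-comparison}, and the corollary reduces to dividing two asymptotic relations after checking that the $\ell^q$ norms are finite and positive.
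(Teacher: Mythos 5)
Your proposal is correct and follows exactly the route the paper intends: the paper offers no separate proof of the corollary, remarking only that it is ``straightforward to deduce'' from (\ref{end-rough}), and your argument---applying Theorem \ref{rough-comparison} to each sequence, substituting $r\mapsto r\|a\|_q$ for the second ratio, and dividing---is precisely that deduction. The added check that $\|a\|_q$ is finite and positive is a harmless and sensible supplement.
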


The proof of Theorem \ref{rough-comparison} is based on the large deviation principle for random series which was derived in \cite{Arcones}. Let us recall a result in \cite{Arcones} (revised a little for our purpose).
\begin{lemma}[\cite{Arcones}]\label{Arones-result}
Let $\{\eta_k\}$ be a sequence of i.i.d. random variables with mean zero satisfying the following condition
\begin{equation}\label{appendix-equation}
%\tag{A.2}
\begin{cases}
&\lim_{u\rightarrow\infty}u^{-p}\log\mathbb{P}\left\{\eta_1\leq-u\right\}=-c_1;\\
&\lim_{u\rightarrow\infty}u^{-p}\log\mathbb{P}\left\{\eta_1\geq u\right\}=-c_2,
\end{cases}
\end{equation}
for some $p\geq1$ and $0<c_1,c_2\leq\infty$ with $\min\{c_1,c_2\}<\infty.$ Suppose $\{x_k\}$ is a sequence of real numbers such that $\sum_{k=1}^{\infty}|x_k|^{2\wedge q}<\infty.$ Then the family $\{n^{-1}\sum_{k=1}^{\infty}x_k\eta_k\}$ satisfies the large deviation principle with speed $n^p$ and a rate function
\begin{equation*}
I(z)=\inf\left\{\sum_{j=1}^{\infty}\psi(u_j):\sum_{j=1}^{\infty}u_jx_j=z\right\}, \,\,\,z\in\mathbb{R}
\end{equation*}
where
\begin{align*}
\psi(t)=
\begin{cases}
c_1|t|^p & \text{ if }t<0;\\
0 & \text{ if }t=0;\\
c_2|t|^p & \text{ if }t>0.
\end{cases}
\end{align*}
Namely, for any measurable set $A\subseteq\mathbb{R},$
\begin{align*}
&-\inf\{I(y):y\in\text{interior of }A\}\leq\liminf_{n\rightarrow\infty}n^{-p}\log\mathbb{P}\left\{n^{-1}\sum_{k=1}^{\infty}x_k\eta_k\in A\right\}\\
&\leq\limsup_{n\rightarrow\infty}n^{-p}\log\mathbb{P}\left\{n^{-1}\sum_{k=1}^{\infty}x_k\eta_k\in A\right\}\leq-\inf\{I(y):y\in\text{closure of }A\}.
\end{align*}
\end{lemma}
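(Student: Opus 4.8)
The plan is to establish the large deviation principle (LDP) in three stages---one coordinate, finitely many coordinates, then the full series---using the contraction principle for the first two and an exponential approximation argument for the last. Write $Z_n=n^{-1}\sum_{k=1}^\infty x_k\eta_k$ and $Z_n^{(N)}=n^{-1}\sum_{k=1}^N x_k\eta_k$, and let $q$ be the conjugate exponent $\frac1p+\frac1q=1$. First I would record the scalar LDP. For each fixed $t>0$ the tail hypothesis (\ref{appendix-equation}) gives
$$n^{-p}\log\mathbb{P}\{n^{-1}\eta_1\geq t\}=t^p\cdot(nt)^{-p}\log\mathbb{P}\{\eta_1\geq nt\}\longrightarrow-c_2t^p,$$
and symmetrically $n^{-p}\log\mathbb{P}\{n^{-1}\eta_1\leq-t\}\to-c_1t^p$. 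These half-line estimates give a full LDP for $\{n^{-1}\eta_1\}$ at speed $n^p$ with good rate function $\psi$, which is lower semicontinuous with compact level sets because $\psi(t)\to\infty$ and (using $\min\{c_1,c_2\}<\infty$) is not identically $+\infty$. Scaling by $x_k$ and invoking the contraction principle, $\{n^{-1}x_k\eta_k\}$ obeys the LDP with rate $w\mapsto\psi(w/x_k)$.

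Next I would assemble the finite truncation. By independence, the vector $(n^{-1}x_1\eta_1,\dots,n^{-1}x_N\eta_N)$ satisfies the LDP in $\mathbb{R}^N$ with the additive rate $\sum_{k=1}^N\psi(w_k/x_k)$, and the continuous summation map $(w_1,\dots,w_N)\mapsto\sum_k w_k$ together with the contraction principle yields the LDP for $Z_n^{(N)}$ with rate function
$$I_N(z)=\inf\Big\{\textstyle\sum_{k=1}^N\psi(u_k):\sum_{k=1}^N x_ku_k=z\Big\},$$
after the substitution $u_k=w_k/x_k$. These rate functions are nonincreasing in $N$, and I would take their pointwise limit $I_\infty(z)=\inf\{\sum_{k=1}^\infty\psi(u_k):\sum_{k=1}^\infty x_ku_k=z\}$ as the candidate for $I$. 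The mean-zero assumption enters here: it forces $n^{-1}\sum_{k>N}x_k\eta_k\to0$, consistent with $I_\infty$ vanishing at the origin.

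The decisive step, and the main obstacle, is the passage from $Z_n^{(N)}$ to $Z_n$. I would show that the truncations are exponentially good approximations: for every $\delta>0$,
$$\lim_{N\to\infty}\limsup_{n\to\infty}n^{-p}\log\mathbb{P}\big\{|Z_n-Z_n^{(N)}|\geq\delta\big\}=-\infty.$$
Granting this, the approximation theorem for large deviations combined with the finite-dimensional LDPs delivers the full LDP for $Z_n$, whose rate function I would then identify with $I_\infty$. The controlling quantity is the tail norm $\|x_{>N}\|_q=(\sum_{k>N}|x_k|^q)^{1/q}$, which tends to $0$: the summability $\sum_k|x_k|^{2\wedge q}<\infty$ places $x$ in the smaller of the spaces $\ell^2$ and $\ell^q$, controlling both the variance-scale fluctuations and the large-deviation tail of $\sum_{k>N}x_k\eta_k$. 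The genuine superexponential upper bound behind the display is the technical heart: one passes through the Weibull-type moment growth $\|\eta_1\|_m\lesssim m^{1/p}$ implied by (\ref{appendix-equation}), and the exponent $2\wedge q$ surfaces precisely because the bulk is governed by the $\ell^2$ norm while the heavy tail is governed by the $\ell^q$ norm, so membership in $\ell^{2\wedge q}$ is exactly what makes the $N\to\infty$ rate diverge.

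Finally I would confirm that $I=I_\infty$ is a good rate function agreeing with the stated formula. Since $x\in\ell^{2\wedge q}$ implies $\|x\|_q<\infty$, Hölder's inequality gives $|\sum_k x_ku_k|\leq\|x\|_q\|u\|_p$, while $\sum_k\psi(u_k)\geq\min\{c_1,c_2\}\|u\|_p^p$; combining the two yields
$$I(z)\geq\min\{c_1,c_2\}\Big(\frac{|z|}{\|x\|_q}\Big)^p.$$
This lower bound shows $I$ is strictly positive off the origin with compact level sets, hence a good rate function, and closes the identification with the limit produced by the approximation theorem. The same Hölder estimate, applied to the tail coordinates $k>N$, is exactly what forces the exponential-approximation rate to diverge as $N\to\infty$, so the two hard pieces of the argument rest on a single mechanism.
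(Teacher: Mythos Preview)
The paper does not prove this lemma at all: it is quoted verbatim as a result of Arcones (the citation in the lemma header) and used as a black box in the proof of Theorem~\ref{rough-comparison}. There is therefore no ``paper's own proof'' to compare against.

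That said, your outline is the standard architecture for this kind of result and is essentially how Arcones proceeds: scalar LDP from the tail hypothesis, finite-dimensional LDP by independence and contraction, then passage to the infinite series via exponentially good approximations. The sketch is sound in its broad strokes. Two points deserve a flag. First, the step ``these half-line estimates give a full LDP for $\{n^{-1}\eta_1\}$'' is true but not automatic; one must check the weak LDP on a base of intervals and then upgrade, or equivalently verify exponential tightness, which here follows from the same tail bounds. Second, the exponential-approximation display is, as you say, the technical heart, and your treatment stops at the level of intuition: the actual estimate requires splitting $\sum_{k>N}x_k\eta_k$ into a bounded (truncated) part controlled in $\ell^2$ via Bernstein-type inequalities and a large-jump part controlled in $\ell^q$ via a union bound on the Weibull tails, which is precisely where the exponent $2\wedge q$ arises. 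Your remark that the mean-zero hypothesis ``forces $n^{-1}\sum_{k>N}x_k\eta_k\to0$'' is not quite the right role for it; centering is needed so that the truncated-part moment bounds have no drift term competing with the speed $n^p$.
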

\begin{proof}[Proof of Theorem \ref{rough-comparison}] We apply Lemma \ref{Arones-result} to the i.i.d. random variables $\eta_k=|\xi_k|-\mathbb{E}|\xi_k|.$ The condition (\ref{log-condition-c}) implies that (\ref{appendix-equation}) is fulfilled. Let us consider a special measurable set $A=[1,\infty).$ By using the Lagrange multiplier, it follows that
\begin{align*}
-\inf\{I(y):y\in\text{interior of }A\}=-\inf\{I(y):y\in\text{closure of }A\}=-c\|x\|_q^{-p}
\end{align*}
(this can be also deduced from Lemma 3.1 of \cite{Arcones}). Then (\ref{end-rough}) follows from the large deviation principle.
\end{proof}

Now let us assume
\begin{align*}
\lim_{u\rightarrow\infty}u^{-p}\log\mathbb{P}\left\{|\xi_1|\geq u\right\}=-c.
\end{align*}
Then it follows easily that
\begin{align*}
\lim_{u\rightarrow\infty}u^{-p/k}\log\mathbb{P}\left\{|\xi_1|^k\geq u\right\}=-c.
\end{align*}
So the logarithmic level comparison for $\xi_n^k$ can be similarly derived as follows.
\begin{prop}\label{rough-comparison-p}
Let $k>0$ be a positive real number, $\{\xi_n\}$ be a sequence of i.i.d. random variables whose common distribution satisfies $\mathbb{E}|\xi_1|^k<\infty$ and
\begin{align*}
\lim_{u\rightarrow\infty}u^{-p}\log\mathbb{P}\left\{|\xi_1|\geq u\right\}=-c
\end{align*}
for some constants $0<c<\infty$ and $p$ such that $p/k\geq1.$ Two sequences of positive real numbers $\{a_n\}$ and $\{b_n\}$ satisfy $\sum_{n=1}^{\infty}a_n^{2\wedge q}<\infty$ and $\sum_{n=1}^{\infty}b_n^{2\wedge q}<\infty$ where $q$ is given by $\frac{1}{p/k}+\frac{1}{q}=1.$ Then as $r\rightarrow\infty$
$$\frac{\log\mathbb{P}\left\{\sum_{n=1}^{\infty}a_n|\xi_n|^k\geq r\right\}}{\log\mathbb{P}\left\{\sum_{n=1}^{\infty}b_n|\xi_n|^k\geq r\right\}}\sim\left(\frac{\|b\|_q}{\|a\|_q}\right)^{p/k}$$
and
$$\frac{\log\mathbb{P}\left\{\sum_{n=1}^{\infty}a_n|\xi_n|^k\geq r\|a\|_q\right\}}{\log\mathbb{P}\left\{\sum_{n=1}^{\infty}b_n|\xi_n|^k\geq r\|b\|_q\right\}}\sim1.$$
\end{prop}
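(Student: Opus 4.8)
The plan is to reduce Proposition~\ref{rough-comparison-p} to Theorem~\ref{rough-comparison} applied to the auxiliary random variables $\zeta_n:=|\xi_n|^k$. Since $k>0$, these are i.i.d.\ nonnegative random variables, so $|\zeta_n|=\zeta_n=|\xi_n|^k$ and hence $\sum_{n=1}^{\infty}a_n|\xi_n|^k=\sum_{n=1}^{\infty}a_n|\zeta_n|$ (and likewise with $\{b_n\}$). It therefore suffices to check that $\{\zeta_n\}$ together with $\{a_n\}$ (resp.\ $\{b_n\}$) satisfies the hypotheses of Theorem~\ref{rough-comparison} with the exponent $p$ there replaced by $p':=p/k$; the conjugate exponent is unchanged, since $\frac{1}{p'}+\frac{1}{q}=\frac{1}{p/k}+\frac{1}{q}=1$ is exactly the relation defining $q$ in the present statement.

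First I would verify the three hypotheses. The moment condition $\mathbb{E}|\zeta_1|=\mathbb{E}|\xi_1|^k<\infty$ is assumed. For the tail condition, note that $\mathbb{P}\{|\zeta_1|\geq u\}=\mathbb{P}\{|\xi_1|\geq u^{1/k}\}$, so with the substitution $v=u^{1/k}$ (which gives $u^{-p'}=v^{-p}$ and $v\to\infty\iff u\to\infty$),
\[
u^{-p'}\log\mathbb{P}\{|\zeta_1|\geq u\}=v^{-p}\log\mathbb{P}\{|\xi_1|\geq v\}\longrightarrow -c;
\]
this is precisely condition~(\ref{log-condition-c}) for $\{\zeta_n\}$ with parameters $(p',c)$, and $p'=p/k\geq1$ as Theorem~\ref{rough-comparison} requires. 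Finally, $\sum_{n=1}^{\infty}a_n^{2\wedge q}<\infty$ and $\sum_{n=1}^{\infty}b_n^{2\wedge q}<\infty$ are assumed outright. Thus Theorem~\ref{rough-comparison}, applied to $\{\zeta_n\}$, gives as $r\to\infty$
\[
\log\mathbb{P}\Big\{\sum_{n=1}^{\infty}a_n|\xi_n|^k\geq r\Big\}\sim -r^{p/k}\,c\,\|a\|_q^{-p/k},
\]
and the identical statement with $\{b_n\}$ in place of $\{a_n\}$.

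The two displayed conclusions then follow by division. Taking the quotient of the asymptotics for $\{a_n\}$ and $\{b_n\}$ yields the first ratio $\sim(\|b\|_q/\|a\|_q)^{p/k}$ at once. For the second, I would substitute $r\mapsto r\|a\|_q$ (still tending to infinity) to obtain $\log\mathbb{P}\{\sum_{n=1}^{\infty}a_n|\xi_n|^k\geq r\|a\|_q\}\sim -c\,r^{p/k}$, and likewise $\log\mathbb{P}\{\sum_{n=1}^{\infty}b_n|\xi_n|^k\geq r\|b\|_q\}\sim -c\,r^{p/k}$, whence the ratio tends to $1$.

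I do not expect a genuine obstacle here: the argument is a bookkeeping reduction (equivalently, one may invoke the Corollary applied to $\{|\xi_n|^k\}$). The only points needing care are that the tail exponent genuinely transforms as $p\mapsto p/k$ under $\xi\mapsto|\xi|^k$ — the substitution $v=u^{1/k}$ above, i.e.\ the elementary fact recorded just before the statement — and that the hypothesis $p/k\geq1$ is exactly what makes Theorem~\ref{rough-comparison} applicable to $\{\zeta_n\}$.
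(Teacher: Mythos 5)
Your proposal is correct and follows exactly the route the paper intends: the paper records the elementary fact that the tail condition transforms as $p\mapsto p/k$ under $\xi\mapsto|\xi|^k$ and then states that the proposition "can be similarly derived," i.e.\ by applying Theorem \ref{rough-comparison} to $|\xi_n|^k$ and dividing the resulting asymptotics. Your verification of the hypotheses (moment, tail limit via $v=u^{1/k}$, and the $\ell^{2\wedge q}$ summability) fills in precisely the bookkeeping the paper leaves implicit.
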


\section*{Appendix}\label{appendix}
In this section, we make a few remarks on the conditions in Theorem \ref{exact-standard-normal} and Theorem \ref{exact-general-normal}. First, we note that conditions (\ref{a-n-b-n-converge}) and (\ref{a-n-b-n-converge-regular-normal}) are not very restrictive, and examples of sequences satisfying these conditions can be produced. For instance, we can consider two sequences with
$$1-\frac{a_n/\|a\|_2}{b_n/\|b\|_2}=\frac{(-1)^n}{n}.$$
To see the relation between (\ref{a-n-b-n-converge}) and (\ref{a-n-b-n-converge-regular-normal}), let us post part of a useful theorem in \cite{Wermuth-1992} from which many convergence results on infinite products and series can be easily derived.
\begin{lemma}[Part (a) of Theorem 1 in \cite{Wermuth-1992}]
Let $\{x_n\}$ be a sequence of real numbers. If any two of the four expressions
$$\prod_{n=1}^{\infty}(1+x_n),\quad \prod_{n=1}^{\infty}(1-x_n),\quad \sum_{n=1}^{\infty}x_n,\quad \sum_{n=1}^{\infty}x_n^2$$
are convergent, then this holds also for the remaining two.
\end{lemma}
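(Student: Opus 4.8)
The plan is to reduce the whole statement to one quantitative fact about $\log$ near $1$ and then run a short case analysis over the $\binom{4}{2}=6$ pairs of convergent expressions. First I would record the elementary estimates that do all the work. For $|x|\le 1/2$ write $\log(1+x)=x-\tfrac12 x^2+r(x)$; since $r'(x)=x^2/(1+x)$, one gets $|r(x)|\le \tfrac23|x|^3$ on $[-1/2,1/2]$. With $g(x)=x-\log(1+x)$ one has $g(0)=g'(0)=0$ and $g''(x)=(1+x)^{-2}\ge 4/9$ on $[-1/2,1/2]$, hence $x-\log(1+x)\ge \tfrac29 x^2\ge 0$ there (and likewise with $x$ replaced by $-x$). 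Finally $e^{-t}\ge 1-t$ gives $-\log(1-t)\ge t$ for $0\le t<1$.

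Next I would make the standing reduction: each of the four expressions being convergent forces $x_n\to 0$ (factors of a convergent product tend to $1$; terms of a convergent series tend to $0$), and since deleting finitely many terms affects none of the four convergence statuses, I may assume $|x_n|\le 1/2$ for all $n$. Then $\prod(1+x_n)$ converges iff $S_+:=\sum\log(1+x_n)$ converges and $\prod(1-x_n)$ converges iff $S_-:=\sum\log(1-x_n)$ converges, so it suffices to work with the four series $S_+,S_-,A:=\sum x_n,Q:=\sum x_n^2$. The key auxiliary observation is: once $Q$ converges, all four converge. Indeed $Q<\infty$ gives $\sum|x_n|^3\le(\sup_n|x_n|)Q<\infty$, so $\sum r(\pm x_n)$ converge absolutely, and the identities $S_+=A-\tfrac12 Q+\sum r(x_n)$, $S_-=-A-\tfrac12 Q+\sum r(-x_n)$ show that once $Q$ converges, convergence of $A$, of $S_+$, and of $S_-$ are mutually equivalent. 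Since $Q$ is itself one of the four expressions, this settles every pair that includes $Q$, and it reduces the remaining three pairs to showing that $Q$ converges and that $A$ converges.

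It then remains to dispose of the pairs $\{S_+,S_-\}$, $\{S_+,A\}$, $\{S_-,A\}$. If $S_+$ and $S_-$ converge, adding them gives that $\sum\log(1-x_n^2)$ converges; since $-\log(1-x_n^2)\ge x_n^2\ge 0$ this forces $Q<\infty$, whereupon $\sum|x_n|^3<\infty$ and $\log(1+x_n)-\log(1-x_n)=2x_n+\bigl(r(x_n)-r(-x_n)\bigr)$ yields convergence of $\sum 2x_n$, i.e. of $A$; apply the auxiliary observation. If $S_+$ and $A$ converge, then $A-S_+=\sum\bigl(x_n-\log(1+x_n)\bigr)$ is a convergent series of nonnegative terms and $x_n-\log(1+x_n)\ge\tfrac29 x_n^2$ forces $Q<\infty$; apply the auxiliary observation. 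The pair $\{S_-,A\}$ follows by replacing $x_n$ with $-x_n$, which swaps $S_+\leftrightarrow S_-$, sends $A$ to $-A$, and leaves $Q$ unchanged. This exhausts all six cases.

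I expect the only delicate point to be the logical ordering: one cannot pass between $S_+$ and $A$ by the naive comparison $\log(1+x_n)\approx x_n$ before $Q$ is known to be finite, so the one-sided inequalities $x-\log(1+x)\ge\tfrac29 x^2$ and $-\log(1-t)\ge t$---not the two-sided Taylor bound---must be invoked first to extract $Q<\infty$ in the three residual pairs; after that the argument is pure bookkeeping through the auxiliary observation.
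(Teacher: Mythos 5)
Your proof is correct. Note, however, that the paper itself does not prove this lemma: it is imported verbatim as Part (a) of Theorem 1 of Wermuth's article and used as a black box in the Appendix, so there is no internal argument to compare against; what you have written supplies a proof the paper deliberately omits. Your argument is the standard one and all the steps check out: the reduction to $|x_n|\le 1/2$ is legitimate under the usual convention that a convergent product has factors tending to $1$ and that finitely many terms are irrelevant; the bounds $|r(x)|\le \tfrac23|x|^3$, $x-\log(1+x)\ge \tfrac29 x^2$ and $-\log(1-t)\ge t$ are verified correctly; the identities $S_\pm=\pm A-\tfrac12 Q+\sum r(\pm x_n)$ hold at the level of partial sums, which is what the equivalence requires; and the case analysis over the six pairs is exhaustive, with the substitution $x_n\mapsto -x_n$ correctly disposing of the pair $\{S_-,A\}$. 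You also correctly identify the one delicate point, namely that $Q<\infty$ must be extracted first via the one-sided inequalities before any two-sided Taylor comparison between $S_\pm$ and $A$ is permissible; this is exactly where a naive argument would conflate conditional and absolute convergence. The only cosmetic caveat is that the conclusion should be read under the stated convention for product convergence (nonzero limit after deleting finitely many vanishing factors), which is also the convention in Wermuth's paper and the one the present paper implicitly uses.
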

Under condition (\ref{a-n-b-n-converge-regular-normal}) in Theorem \ref{exact-general-normal}, it follows from this result that
$$\prod_{n=1}^{\infty}\left(2-\frac{a_n/\|a\|_2}{b_n/\|b\|_2}\right)\text{ and }\prod_{n=1}^{\infty}\frac{a_n/\|a\|_2}{b_n/\|b\|_2}\text{ converge.}$$
This implies that $\sum_{n=1}^{\infty}\log\left(\frac{a_n/\|a\|_2}{b_n/\|b\|_2}\right)$ is convergent. The facts that
$$\sum_{n=1}^{\infty}\left(1-\frac{b_n/\|b\|_2}{a_n/\|a\|_2}\right)^2<\infty\text{ and }\prod_{n=1}^{\infty}\frac{b_n/\|b\|_2}{a_n/\|a\|_2}\text{ converge}$$
yield
$$\prod_{n=1}^{\infty}\left(2-\frac{b_n/\|b\|_2}{a_n/\|a\|_2}\right)\text{ is convergent.}$$

\end{document}